 \newtheorem{thm}{Theorem}[section]
 \newtheorem{lm}[thm]{Lemma}
 \newtheorem{crl}[thm]{Corollary}
 \newtheorem{prop}[thm]{Proposition}
 \theoremstyle{definition}
 \newtheorem{rmk}[thm]{Remark}
 \newtheorem{df}[thm]{Definition}
 \newtheorem{constr}[thm]{Construction}
 \newcommand{\eps}{\varepsilon}
 \newcommand{\RR}{\mathbb R}
 \newcommand{\FF}{\mathbb F}
 \newcommand{\CC}{\mathbb C}
 \newcommand{\vspan}[1]{\left \langle #1 \right \rangle}
 \newcommand{\set}[1]{ \left \{ #1 \right \} }
 \newcommand{\sett}[2]{ \left\{ #1 \, \, || \, \, #2 \right \} }
 \newcommand{\pg}{\textnormal{PG}}
 \newcommand{\one}{\mathbf 1}
 \newcommand{\zero}{\mathbf 0}
 \newcommand{\pgl}{\textnormal{PGL}}
 \newcommand{\psl}{\textnormal{PSL}}
 \renewcommand{\mp}{\mathcal P}
 \newcommand{\mb}{\mathcal B}
 \newcommand{\ma}{\mathcal A}
 \newcommand{\mf}{\mathcal F}
 \newcommand{\mq}{\mathcal Q}
 \newcommand{\mo}{\mathcal O}
 \newcommand{\cm}{\textnormal{CM}}
\title{Stability of Erd\H os-Ko-Rado Theorems in Circle Geometries}
\author{Sam Adriaensen\thanks{Vrije Universiteit Brussel, Pleinlaan 2, 1050 Elsene, Belgium. Email: \url{sam.adriaensen@vub.be}  } \\ {\it Vrije Universiteit Brussel}}
\date{}
\begin{document}

\maketitle

\begin{abstract}
 Circle geometries are incidence structures that capture the geometry of circles on spheres, cones and hyperboloids in 3-dimensional space.
 In a previous paper, the author characterised the largest intersecting families in finite ovoidal circle geometries, except for Möbius planes of odd order.
 In this paper we show that also in these Möbius planes, if the order is greater than 3, the largest intersecting families are the sets of circles through a fixed point.
 We show the same result in the only known family of finite non-ovoidal circle geometries.
 Using the same techniques, we show a stability result on large intersecting families in all ovoidal circle geometries.
 More specifically, we prove that an intersecting family $\mf$ in one of the known finite circle geometries of order $q$, with $|\mf| \geq \frac 1 {\sqrt2} q^2 + 2 \sqrt 2 q + 8$, must consist of circles through a common point, or through a common nucleus in case of a Laguerre plane of even order.
\end{abstract}

{\bf Keywords.} Erd\H os-Ko-Rado, Finite geometry, Möbius planes, Laguerre planes, Minkowski planes.

\section{Introduction}

In their seminal paper \cite{erdoskorado}, Erd\H os, Ko, and Rado proved the following theorem.

\begin{thm}[{\cite{erdoskorado}}]
Choose integers $k$ and $n$ such that $0 < k \leq n/2$.
Let $\mathcal F$ be a family of subsets of size $k$ of $\set{1,\dots,n}$ such that for each $F, G \in \mathcal F$, $F \cap G \neq \emptyset$.
Then
\[
 | \mathcal F | \leq \binom{n-1}{k-1}.
\]
Moreover, if $k < n/2$, then equality holds if and only if $\mathcal F$ is the family of all $k$-sets through a fixed element of $\set{1,\dots,n}$.
\end{thm}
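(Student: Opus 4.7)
The plan is to use Katona's cycle method, a slick double-counting argument. I would consider all $n!$ cyclic arrangements of $\{1, \ldots, n\}$ (bijections $\mathbb{Z}/n\mathbb{Z} \to \{1, \ldots, n\}$). Each such arrangement contains $n$ distinguished $k$-subsets, namely the $k$-arcs of consecutive positions. The crux is a \emph{cyclic lemma}: for $k \leq n/2$, any pairwise intersecting collection of $k$-arcs inside a single cyclic arrangement has at most $k$ members, and, when $k < n/2$, equality forces all those arcs to share a common element.

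To prove this cyclic lemma, fix one arc $A$ in the intersecting collection and let it occupy positions $1, \ldots, k$. Any other arc meeting $A$ starts either $\ell$ positions to the left ($\ell \in \{1, \ldots, k-1\}$) or $r$ positions to the right ($r \in \{1, \ldots, k-1\}$). A direct check shows that any two left arcs intersect, any two right arcs intersect, but a left arc at offset $\ell$ and a right arc at offset $r$ intersect if and only if $\ell + r \leq k - 1$. Hence any pairwise intersecting collection chooses offset sets $L, R \subseteq \{1, \ldots, k-1\}$ with $\max L + \max R \leq k - 1$, forcing $|L| + |R| \leq k - 1$ and therefore at most $k$ arcs in total. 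The equality case forces $L = \{1, \ldots, |L|\}$, $R = \{1, \ldots, |R|\}$, and $\max L + \max R = k - 1$; then position $k - |L| = 1 + |R|$ is the unique element shared by all chosen arcs.

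Next I would double-count pairs $(\sigma, F)$ where $F \in \mathcal{F}$ appears as a $k$-arc in the arrangement $\sigma$. Each $F$ contributes $n \cdot k! \cdot (n-k)!$ such pairs (choose the starting position, then order $F$ inside the arc and its complement outside), while each $\sigma$ contributes at most $k$ pairs by the cyclic lemma. This yields
\[
|\mathcal{F}| \cdot n \cdot k! \cdot (n-k)! \leq k \cdot n!,
\]
which simplifies to $|\mathcal{F}| \leq \binom{n-1}{k-1}$.

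For the uniqueness statement when $k < n/2$, equality forces every cyclic arrangement to contain exactly $k$ members of $\mathcal{F}$ as arcs, and by the cyclic lemma these share a common element. The main obstacle I foresee is promoting this local consistency (a common element in each arrangement) to a single global element common to \emph{every} member of $\mathcal{F}$. My plan is to pick any two $F_1, F_2 \in \mathcal{F}$ and count cyclic arrangements in which both occur as arcs; the condition $k < n/2$ provides enough slack between them so that the equality case of the lemma pins down the arrangement's common element inside $F_1 \cap F_2$. Threading this through all pairs should produce a single element lying in every $F \in \mathcal{F}$, confirming that $\mathcal{F}$ is a star.
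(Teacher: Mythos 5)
The paper does not prove this statement: it is the classical Erd\H{o}s--Ko--Rado theorem, quoted with a citation purely as motivation, so your proposal has to be judged on its own terms. Your choice of Katona's cycle method is a legitimate and more elementary route than the original shifting argument, and the double count $|\mathcal{F}|\cdot n\cdot k!\cdot (n-k)!\le k\cdot n!$ is set up correctly. However, your proof of the cyclic lemma contains a genuine error. The claim that a left arc at offset $\ell$ and a right arc at offset $r$ intersect if and only if $\ell+r\le k-1$ is false unless $n\ge 3k-2$: the two arcs can also meet by wrapping around the far side of the circle, which happens exactly when $\ell+r\ge n+1-k$. Concretely, take $n=11$, $k=5$, $A$ in positions $1,\dots,5$; the right arc at offset $4$ occupies positions $5,\dots,9$ and the left arc at offset $4$ occupies positions $8,9,10,11,1$, so they share positions $8$ and $9$, and $\{A,\,\text{left}_4,\,\text{right}_4\}$ is pairwise intersecting with $\max L+\max R=8>k-1=4$. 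Hence the step ``$\max L+\max R\le k-1$, therefore $|L|+|R|\le k-1$'' does not follow from pairwise intersection. The conclusion $|L|+|R|\le k-1$ is nevertheless true, and the correct reason is Katona's pairing: the left arc at offset $\ell$ and the right arc at offset $r$ are disjoint precisely when $k\le \ell+r\le n-k$, an interval that is nonempty and contains $\ell+r=k$ exactly because $k\le n/2$. Thus $R$ must avoid the set $\{k-\ell:\ell\in L\}$, and since both are subsets of $\{1,\dots,k-1\}$ of sizes $|R|$ and $|L|$, one gets $|L|+|R|\le k-1$ and hence at most $k$ arcs. You should replace your intersection criterion by this pairing; as written the argument is broken for all $n\le 3k-3$, including the boundary case $n=2k$ that the theorem explicitly covers.

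The second issue is that the uniqueness clause for $k<n/2$ is not actually proved. Your description of the equality case of the cyclic lemma ($L$ and $R$ being initial segments) rests on the same flawed criterion, and the passage from ``every cyclic arrangement containing $k$ members of $\mathcal{F}$ as arcs has those arcs through a common position'' to ``$\mathcal{F}$ is a star'' is left as a plan with a self-acknowledged obstacle. This globalization is exactly the delicate part of extracting the uniqueness statement from the cycle method and must be carried out explicitly (for instance by taking $F_1,F_2\in\mathcal{F}$ with $|F_1\cap F_2|$ minimal and exhibiting cyclic orders that force every other member of $\mathcal{F}$ through a fixed element of $F_1\cap F_2$). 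As it stands, the ``moreover'' part of the theorem is asserted rather than established.
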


Since then, there has been a broad interest in the concept of \emph{intersecting families}.
In a general sense, this means the following.
An \emph{incidence structure} is a tuple $(\mp,\mb)$, where we call the elements of $\mp$ \emph{points}, and the elements of $\mb$ \emph{blocks}, and where each block is a subset of $\mp$.
An intersecting family is a set $\mf \subseteq \mb$ such that any two elements of $\mf$ have non-empty intersection.
A typical question in the vein of the Erd\H os-Ko-Rado theorem asks what the size and structure of the largest intersecting families in some incidence structure are.

Extremal combinatorics is concerned with characterising the largest (or smallest) combinatorial objects that satisfy some property.
If all objects of maximum size have the same structure, one could wonder whether there exist large objects which have a significantly different structure.
If such objects do not exist, we speak of a \emph{stability result}.

In \cite{adriaensen2021}, the author proved an Erd\H os-Ko-Rado type theorem in ovoidal circle geometries.
More specifically, it was shown that if the circle geometry is of order $q>2$, and not a Möbius plane of odd order, then the largest intersecting families are exactly the collections of circles through a fixed point, or through a fixed nucleus if the circle geometry is a Laguerre plane of even order.
In this article, we extend this theorem to all known finite circle geometries, and to a stability result.
The two main theorems are as follows.

\begin{thm}
 \label{ThmIntrMain1}
Consider one of the known finite circle geometries of order $q>2$.
In case of a Möbius plane, assume that $q>3$.
The largest intersecting families consist of all circles through a fixed point, or through a fixed nucleus in case of a Laguerre plane of even order.
\end{thm}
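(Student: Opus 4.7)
The plan is to split the theorem into cases according to the type of circle geometry. For all ovoidal circle geometries other than Möbius planes of odd order, the result already appears in \cite{adriaensen2021}, so I would invoke that theorem directly and reduce to the two remaining cases: Möbius planes of odd order $q > 3$ and the one known family of finite non-ovoidal circle geometries.

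For a Möbius plane $\mathcal M$ of odd order $q \geq 5$, let $\mf$ be an intersecting family of size at least $q(q+1)$ and suppose for contradiction that no point lies on every circle of $\mf$. I would proceed by \emph{derivation}: fix a point $P$ lying in some circle of $\mf$ and form the affine plane $\ag(2,q)$ whose points are the $q^2$ points of $\mathcal M \setminus \set{P}$ and whose lines are the punctured circles $C \setminus \set{P}$ with $P \in C$. Partition $\mf = \mf_P \sqcup \mf'$, where $\mf_P$ consists of circles of $\mf$ through $P$. Under the derivation, circles of $\mf_P$ become lines of $\ag(2,q)$ and circles of $\mf'$ become $(q+1)$-arcs. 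The intersection requirement forces every line associated to $\mf_P$ to meet every arc associated to $\mf'$, so no such line can be an external line of any such arc. Since a $(q+1)$-arc in $\ag(2,q)$ with $q$ odd has exactly $(q+1)(q-2)/2$ external lines, this gives $|\mf_P| \leq (q+1)(q+2)/2$ whenever $\mf' \neq \emptyset$. To bound $|\mf'|$ I would use that any two arcs from $\mf'$ must share a point, combined with a double count over incidences and a careful choice of $P$ (for instance a point lying on many circles of $\mf$). Combining these estimates should yield $|\mf| < q(q+1)$ whenever $\mf' \neq \emptyset$, contradicting maximality. For the non-ovoidal case, I would use the explicit algebraic description of the construction to perform a similar derivation and adapt the combinatorial estimates, relying on the large automorphism group of the construction.

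The main obstacle is bounding $|\mf'|$ sharply in the Möbius case without assuming the plane is Miquelian. In a non-Miquelian Möbius plane of odd order, the arcs coming from circles need not be conics, so Segre-type classifications do not apply, and one must work with only the combinatorial properties of $(q+1)$-arcs in $\ag(2,q)$. The hypothesis $q > 3$ is essential because the external-line count $(q+1)(q-2)/2$ becomes too small at $q = 3$ (where every line is automatically a tangent or secant of any $4$-arc), so the argument ruling out $\mf' \neq \emptyset$ collapses in that case.
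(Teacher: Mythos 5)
Your reduction to the two open cases (Möbius planes of odd order $q>3$ and the non-ovoidal Minkowski planes of Construction \ref{ConstrMinkowski}) matches the paper, and the derivation at $P$ turning circles of $\mf_P$ into lines and circles of $\mf'$ into $(q+1)$-arcs of $\ag(2,q)$ is sound, as is the count $|\mf_P|\leq \binom{q+2}{2}$ when $\mf'\neq\emptyset$ (this is exactly Lemma \ref{LmHM} in disguise). But the step you flag as ``the main obstacle'' --- bounding $|\mf'|$ --- is not a technical loose end; it is the entire content of the theorem, and your proposal contains no mechanism for it. You need $|\mf'| < q(q+1)-\binom{q+2}{2} \approx \frac12 q^2$, where $\mf'$ is an arbitrary intersecting family of circles avoiding $P$; bounding such a family by $o(q^2)$ is essentially the original problem again, and pairwise intersection of the arcs plus a double count over incidences gives nothing better than bounds of order $q^2$. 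Note also that the point $P$ produced by a counting argument of the type you suggest (cf.\ Proposition \ref{PropMany}) lies on only about $2q$ circles of $\mf$, far below the threshold $\binom{q+2}{2}$ at which your external-line bound would bite, so the gap between ``$P$ lies on roughly $2q$ circles'' and ``$P$ lies on all circles'' is where the work lives. The paper closes this gap spectrally: it applies the bipartite expander mixing lemma to the disjointness graph $G_P$ between circles through $P$ and circles missing $P$ (Proposition \ref{PropFewOrMany}), whose second eigenvalue is computed via the association schemes of \S\ref{SectionAssoc}; the resulting inequality $|S||T|\lesssim \lambda_2^2$ forces any point on $\geq 2q-1$ circles of a maximum family to lie on all of them. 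Nothing in your outline substitutes for this quantitative trade-off between $|\mf_P|$ and $|\mf'|$.

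Two smaller points. First, your explanation of the hypothesis $q>3$ is not the right one: a $4$-arc in $\ag(2,3)$ does have external lines (two of them), and the genuine obstruction at $q=3$ is that the unique Möbius plane of order $3$ contains intersecting families of size $15 > q(q+1)=12$, namely the two classes $\sett{P^\perp}{\kappa(P)=S_3}$ and $\sett{P^\perp}{\kappa(P)=\overline{S_3}}$ (Remark \ref{RmkMöbiusQ=3}); any correct argument must fail to exclude these when $q=3$. Second, the non-ovoidal case is dispatched in one sentence in your proposal; the paper handles it by showing the relevant intersection graphs of $\cm(2,q,\varphi)$ are isomorphic to those of the ovoidal Minkowski plane (Lemma \ref{LmG1MinkowskiIsomorphic} and Lemma \ref{LmDeza}) and then running the same spectral machinery, with the ovoidal case itself cited from Meagher and Spiga --- ``relying on the large automorphism group'' is not yet an argument.
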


\begin{thm}
 \label{ThmIntrMain2}
If $\mf$ is an intersecting family in one of the known finite circle geometries of order $q$, and $|\mf| \geq \frac 1 {\sqrt 2} q^2 + 2 \sqrt 2 q + 8$, then all circles of $\mf$ go through a common point, or a common nucleus in case of a Laguerre plane of even order.
\end{thm}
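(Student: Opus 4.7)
My plan is to argue by contrapositive: assume that $\mf$ is an intersecting family in one of the listed circle geometries of order $q$ that is not of the described form (no single point lies in every circle of $\mf$, and in the even-Laguerre case no nucleus lies in every circle either), and deduce the bound $|\mf| < \frac{1}{\sqrt 2} q^2 + 2 \sqrt 2 q + 8$. For a point $Q$, write $\mf_Q := \{C \in \mf : Q \in C\}$.

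I would begin by following the same localization strategy used to prove Theorem~\ref{ThmIntrMain1}. Pick a point $P$ maximizing the local degree $s := |\mf_P|$, and let $t := |\mf| - s$ denote the number of circles of $\mf$ avoiding $P$. Because $\mf$ is not a star at $P$ we have $t \ge 1$, so there is at least one witness circle $C_0 \in \mf \setminus \mf_P$. In the residual geometry at $P$---an affine plane of order $q$ in the Möbius case, and the appropriate analogues for Laguerre and Minkowski---the circles of $\mf_P$ correspond to lines and $C_0$ corresponds to an oval of $q+1$ points. The intersecting condition forces each such line to meet this oval, yielding an upper bound on $s$ of roughly $(q+1)(q+2)/2 \approx q^2/2$. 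Moreover, each additional circle of $\mf$ avoiding $P$ gives another oval that all the lines of $\mf_P$ must meet simultaneously, which further restricts $s$ as $t$ grows.

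The key counting step combines this local estimate with a global double count. Every pair of circles in $\mf$ shares at least one point, so
\[
 \binom{|\mf|}{2} \;\leq\; \sum_Q \binom{|\mf_Q|}{2}.
\]
Using $\sum_Q |\mf_Q| = (q+1)|\mf|$, $\max_Q |\mf_Q| = s$, and the sharper estimate $|\mf_Q| \leq (q+1) + t$ for $Q \neq P$ (each circle through $Q$ either also passes through $P$, and at most $q+1$ circles do so, or it belongs to $\mf \setminus \mf_P$), one can upper-bound the right-hand side. Rearranging yields a quadratic inequality linking $|\mf|$, $s$ and $t$; optimising over the trade-off between $s$ and $t$ is what produces the leading constant $\frac{1}{\sqrt 2}$ together with the advertised lower-order terms.

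The principal difficulty is extracting the sharp constant $\frac{1}{\sqrt 2}$ rather than the looser $\frac{1}{2}$ that drops out of the naive analysis. This requires balancing the two regimes ``many circles of $\mf$ pass through $P$'' versus ``many circles of $\mf$ avoid $P$'' and, in the latter case, possibly localizing at a second heavy point to cover the family $\mf \setminus \mf_P$. A secondary difficulty is to run the argument uniformly across Möbius, Laguerre (both parities), and Minkowski planes, whose residual structures differ: the Laguerre-even case introduces the nucleus as a distinguished point, and the Minkowski residual splits into two parallel classes, so the count of lines meeting an oval has to be adapted. Reusing the structural lemmas developed for Theorem~\ref{ThmIntrMain1} should handle these variations in a uniform framework.
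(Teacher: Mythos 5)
Your outline has the right skeleton (localise at a heavy point $P$, and use the Hilton--Milner-type bound $s\leq\binom{q+2-\rho}{2}$ of Lemma \ref{LmHM} when $\mf$ is not a star), but the engine you propose for the key step does not work. The double count $\binom{|\mf|}{2}\leq\sum_Q\binom{|\mf_Q|}{2}$ is essentially information-free for this problem: bounding each term by $\binom{|\mf_Q|}{2}\leq\frac{(s-1)|\mf_Q|}{2}$ and using $\sum_Q|\mf_Q|=(q+1)|\mf|$ gives only $|\mf|\leq(s-1)(q+1)+1=O(q^3)$, which is the order of the total number of circles; and the refinement $|\mf_Q|\leq(q+1-\rho)+t$ for $Q\neq P$ yields, after writing $|\mf|=s+t$, an inequality whose large side already contains a term $\tfrac12 qst\geq st$, so it is satisfied identically and constrains nothing. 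The reason is that for an intersecting family in which most pairs meet in two points, $\sum_Q\binom{|\mf_Q|}{2}=\sum_{\{c_1,c_2\}}|c_1\cap c_2|\approx 2\binom{|\mf|}{2}$, so the count is a near-identity independent of $|\mf|$. What the paper uses in place of this step is genuinely spectral: the bipartite graph $G_P$ of Definition \ref{DfGp} joining circles through $P$ to disjoint circles missing $P$; since $\mf$ is intersecting, $e(S,T)=0$ for $S=\sett{c\in\mf}{P\in c}$ and $T=\mf\setminus S$, and the bipartite expander mixing lemma with the computed value of $\lambda_2(G_P)$ gives $|S|\,|T|\leq q^3\frac{q^2-1}{(q-2)^2}$ (Proposition \ref{PropFewOrMany}). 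This ``few or many'' dichotomy $st=O(q^3)$ has no elementary counting substitute in your write-up, and it is the step that rules out intermediate values of $s$.

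Second, the constant $\frac1{\sqrt2}$ does not emerge from optimising your trade-off; it comes from the factor $2$ in Proposition \ref{PropMany}, which produces a point lying on $s\geq\frac{2q}{(q+1)^2}|\mf|\approx 2|\mf|/q$ circles of $\mf$, twice what averaging over the points of an arbitrary circle (or taking a maximum-degree point) obviously gives. Securing that factor $2$ requires showing that some circle of $\mf$ is tangent to few other circles of $\mf$, which the paper does by bounding the edges of $G_1[\mf]$ via the non-bipartite expander mixing lemma and the eigenvalues of the $1$-intersecting graph (hence the case analysis over the association schemes of \S\ref{SectionAssoc}). Combining $s\gtrsim 2|\mf|/q$ with $s(|\mf|-s)\lesssim q^3$ forces $2|\mf|^2/q>q^3$, i.e.\ $|\mf|>q^2/\sqrt2$ up to lower-order terms; that is exactly where the advertised constant originates. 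Without both spectral inputs your argument cannot get below $|\mf|=O(q^3)$, let alone reach $\frac1{\sqrt2}q^2+2\sqrt2 q+8$.
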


As a corollary, we obtain stability results of intersecting families in $\pgl(2,q)$ and quadratic polynomials over finite fields.

\bigskip

The structure of the paper is as follows.
In \S \ref{SectionPreliminaries}, we give the necessary background on circle geometries and algebraic graph theory.
In \S \ref{SectionAssoc}, we discuss association schemes related to circle geometries.
In particular, we are interested in their eigenvalues, as they will be essential for the further proofs.
In the next two sections, we prove that if $\mf$ is a large intersecting family in a known circle geometry, then all of the circles of $\mf$ go through a common point $P$.
This is done in two steps.
In \S \ref{SectionMany}, we find a candidate point $P$, that does not lie on ``few'' circles of $\mf$.
In \S \ref{SectionFewOrMany}, we prove that if $\mf$ is a large intersecting family in a circle geometry, then every point lies on either few or many circles of $\mf$.
In \S \ref{SectionMain}, we put these two results together to find a point that lies on many circles of $\mf$, and prove that it in fact lies on all circles of $\mf$.
This yields the two theorems discussed above.
These results also imply stability results of large intersecting families in $\pgl(2,q)$ and in the set of polynomials over a finite field of degree at most two.

\bigskip

The most important step is the idea behind \S \ref{SectionFewOrMany}.
Consider the following situation.
Let $G$ be a regular graph with independence number $\alpha(G)$.
Suppose that we know cocliques $C_1, \dots, C_n$ of size $\alpha(G)$, such that each $C_i$ and its complement form an equitable partition of the graph (which happens for example if $\alpha(G)$ meets Hoffman's ratio bound.)
Take such a coclique $C_i$, and remove all edges from $G$ which have no endpoint in $C_i$.
We are left with a bipartite graph, which must also be biregular.
If the eigenvalues of this bipartite graph can be calculated, we can apply the expander mixing lemma.
The idea is that if we take another large coclique $C'$ in $G$, the bipartite expander mixing lemma implies that if $|C_i \cap C'|$ is large, $C'$ is completely contained in $C_i$.
If we can prove that every large coclique $C'$ has a large intersection with at least one of the $C_i$'s, we have proven that the $C_i$'s are the only large cocliques.

\section{Preliminaries}
 \label{SectionPreliminaries}
 
\subsection{Circle geometries}

In this paper, we will investigate a certain type of incidence structures, called \emph{circle geometries}.
The blocks of a circle geometry are often called \emph{circles}.
To explain what a circle geometry is, we first need to introduce the concept of a \emph{parallel relation}.
This is a partition of the points into so-called \emph{parallel classes}.
We call two points \emph{parallel} if they are in the same parallel class of some parallel relation.

A circle geometry is an incidence structure $(\mp,\mb)$ with at most two parallel relations, such that the following properties hold:
\begin{enumerate}
 \item Given three pairwise non-parallel points, there is a unique circle containing these three points.
 \item Given a circle $c$, a point $P \in c$, and a point $Q \notin c$, not parallel with $P$, there is a unique circle through $P$ and $Q$, which is \emph{tangent} to $c$, i.e.\ it intersects $c$ only in $P$.
 \item Any circle contains a unique point from each parallel class.
 \item Two parallel classes from different parallel relations intersect in a unique point.
 \item Each circle contains at least three points, there exists a circle and a point not on this circle.
\end{enumerate}
A circle geometry with 0, 1, or 2 parallel relations is called respectively a \emph{Möbius plane}, \emph{Laguerre plane}, or a \emph{Minkowski plane}.

\bigskip

In this paper, we only concern ourselves with finite circle geometries, which means that we assume the number of points is finite.
In this case, there exists some integer $q\geq 2$, called the \emph{order} of the circle geometry, such that the following properties hold, where $\rho$ denotes the number of parallel relations.
\begin{itemize}
 \item $|\mp| = \frac{q^2+1-\rho}{q+1-\rho}(q+1)$,
 \item $b := |\mb| = q^3 + (1-\rho)q$,
 \item every circle contains $q+1$ points,
 \item every point lies on $q^2 + (1-\rho)q$ circles,
 \item through any pair of non-parallel points, there are $q+1-\rho$ circles,
 \item every parallel class contains $q+\rho-1$ points.
\end{itemize}

From now on, we denote a circle geometry of order $q$ with $\rho$ parallel classes as a $\cm(\rho,q)$.

\bigskip

Given a point $P$ in a $\cm(\rho,q)$, we define the \emph{residue} at $P$ as the incidence structure $(\mp',\mb')$, where $\mp'$ consists of the points not parallel with $P$, and $\mb'$ consist the circles through $P$ with $P$ removed, and the parallel classes not containing $P$.
If $\rho=2$, every parallel class not containing $P$, contains a unique point parallel with $P$.
These points are also removed from the blocks in $\mb'$.
The residue at any point $P$ is an affine plane of order $q$.

\bigskip

Let $\FF_q$ denote the finite field of size $q$, where $q$ is understood to be a prime power, and let $\pg(n,q)$ denote the projective space arising from the vector space $\FF_q^{n+1}$.
A \emph{quadric} in $\pg(n,q)$ is the set of points whose coordinates satisfy some homogeneous quadratic equation.
To capture the behaviour of a quadric, Buekenhout \cite{buekenhout69} defined a \emph{quadratic set} in $\pg(n,q)$ as a set of points $\mq$ satisfying the following properties.
\begin{enumerate}
 \item Any line that intersects $\mq$ in more than two points, is completely contained in it.
 \item For any point $P \in \mq$, the union of all lines through $P$ which intersect $\mq$ in 1 or $q+1$ points, is a hyperplane or the entire space $\pg(n,q)$.
 We call this the \emph{tangent hyperplane} of $P$.
 \item $\mq$ is not the union of two subspaces.
\end{enumerate}

A point $P$ is called \emph{degenerate} if its tangent hyperplane is the entire space.
A quadratic set is called degenerate if it has a degenerate point.

There are three types of quadratic sets in $\pg(3,q)$, $q>2$.
\begin{enumerate}
 \item An \emph{ovoid} is a set of $q^2+1$ point, no three on a line.
 It was proven by Barlotti \cite{barlotti55}, that the only ovoids in $\pg(3,q)$, $q$ odd, are the so-called elliptic quadrics $\mq^-(3,q)$.
 \item An \emph{oval} is a set of $q+1$ points in $\pg(2,q)$, no three on a line.
 An \emph{oval cone} in $\pg(3,q)$ is a set constructed by taking an oval $\mo$ in a plane, taking some point $R$ not in this plane, and taking the union of all lines through $R$ which intersect $\mo$.
 By Segre's famous result \cite{segre55}, the only ovals in $\pg(2,q)$, $q$ odd, are the quadrics $\mq(2,q)$, also called non-degenerate conics.
 Therefore, there is up to isomorphism only one oval cone in $\pg(3,q)$ if $q$ is odd.
 \item By a result by Buekenhout \cite{buekenhout69}, the only remaining quadratic sets are the hyperbolic quadrics $\mq^+(3,q)$.
 They can be constructed as follows.
 Take three pairwise disjoint lines $l_0,l_1,l_2$ in $\pg(3,q)$.
 There are exactly $q+1$ lines $m_0, \dots, m_q$, which intersect all of the lines $l_0,l_1,l_2$.
 These lines are also pairwise disjoint.
 $l_0,l_1,l_2$ can be extended in a unique way to a set of $q+1$ pairwise disjoint lines $l_0,\dots,l_q$, such that all lines $m_i$ and $l_j$ intersect in a point.
 Then $\bigcup_{i=0}^q m_i = \bigcup_{i=0}^q l_i$ as point sets.
 Such a point set is a hyperbolic quadric.
\end{enumerate}

Given a quadratic set $\mq$ in $\pg(3,q)$, call a plane $\pi$ an \emph{oval plane} if $\pi \cap \mq$ is an oval in $\pi$.
If $\mq$ is non-degenerate, every plane is either a tangent or an oval plane.
If $\mq$ has a degenerate point $P$, the oval planes are exactly the planes missing $P$.
Consider the incidence structure $(\mp,\mb)$, where $\mp$ consist of the non-degenerate points of $\mq$, and $\mb = \sett{\pi \cap \mq}{\pi \textnormal{ an oval plane}}$.
This incidence structure is a circle geometry.
More specifically, it is a Möbius, Laguerre, or Minkowski plane, if $\mq$ is respectively an ovoid, oval cone, or hyperbolic quadric.
A circle geometry that arises in this way from a quadratic set is called \emph{ovoidal}.

Dembowski \cite{dembowski64} and Heise \cite{heise} proved that a Möbius, respectively Minkowski plane of even order must be ovoidal.

\bigskip

Consider an oval cone $\mq$ in $\pg(3,q)$ with vertex $R$ and base $\mo$ in some plane $\pi \not \ni R$.
If $q$ is even, then $\mo$ has a \emph{nucleus} $N$.
This is a point in $\pi$ such that the tangent lines to $\mo$ in $\pi$ are exactly the lines through $N$ (in $\pi$).
Let $\mq_+$ denote $\mq \cup \vspan{R,N}$.
If $L$ denotes the Laguerre plane arising from $\mq$, then let $L_+$ denote the incidence structure $(\mp_+,\mb_+)$ with $\mp_+ = \mq_+ \setminus \set R$, and $\mb_+ = \sett{\pi \cap \mq^+}{\pi \textnormal{ an oval plane}}$.
We call this incidence structure an \emph{extended Laguerre plane}.
Combinatorially, the most important difference between a Laguerre plane and an extended Laguerre plane, is that in an extended Laguerre plane, two distinct circles intersect in either 0 or in 2 points, but never in exactly 1.

Two oval planes intersect in the extended Laguerre plane if and only if they intersect in the original Laguerre plane.
Thus, switching to the extended Laguerre plane makes no (significant) difference for the study of intersecting families.

\bigskip

We now describe the only known non-ovoidal circle geometries.
Let $A$ be a set containing more than three elements, and let $\Pi$ be a sharply 3-transitive set of permutations of $A$.
This means that if $(a_1,a_2,a_3)$ and $(a_4,a_5,a_6)$ are ordered triples of three distinct elements of $A$, there is a unique $\pi \in \Pi$ with $\pi(a_i) = a_{i+3}$ for $i=1,2,3$.
Suppose that $\Pi$ contains the identity element.
For each $\pi \in \Pi$, define its \emph{graph} as $\sett{(a,\pi(a))}{a\in A}$.
Consider the incidence structure $(\mp,\mb)$ with $\mp = A \times A$ and $\mb$ the set of graphs of $\pi \in \Pi$.
This incidence structure is a Minkowski plane, and each finite Minkowski plane can be constructed in this way.

\begin{constr}
 \label{ConstrMinkowski}
Let $q$ be a prime power, and let $\varphi$ be a field automorphism of $\FF_q$.
For each $M \in \pgl(2,q)$, define the map $f_M \in \textnormal{P$\Gamma$L}(2,q)$ as
\[
 f_M(x) = \begin{cases}
  Mx & \text{if } M \in \psl(2,q), \\
  M x^\varphi & \text{otherwise.}
 \end{cases}
\]
Then the set $\Pi_\varphi = \sett{f_M}{M \in \pgl(2,q)}$ is sharply 3-transitive on $\pg(1,q)$.
Denote the corresponding Minkowski plane as $\cm(2,q,\varphi)$.
\end{constr}

If $\varphi = \text{id}$, then the corresponding Minkowski plane is the ovoidal one.
For $q$ even, $\psl(2,q) = \pgl(2,q)$, and the choice of $\varphi$ does not matter (recall that all Minkowski planes of even order are ovoidal).
If $q$ is odd, $|\pgl(2,q) : \psl(2,q)|=2$.
A non-trivial field automorphism $\varphi$ gives rise to a non-ovoidal circle geometry.

\bigskip

For a survey on circle geometries, see e.g.\ Hartmann \cite{hartmann} or Delandtsheer \cite[\S 5]{delandtsheer}.
It is worth mentioning that circle geometries are also called \emph{Benz planes} and Möbius planes are also called \emph{inversive planes}.

\subsection{Graph theory}

All graphs considered in this paper are simple, undirected and without loops.
Let $G=(V,E)$ be a graph.
If two vertices $x$ and $y$ are adjacent, we denote this as $x \sim y$.
The \emph{adjacency matrix} of $G$ is the matrix $A(G)$ whose rows and columns are labelled by the vertices of $G$, such that
\[
 A(G)_{x,y} = \begin{cases}
  1 & \text{if } x \sim y, \\
  0 & \text{otherwise}.
 \end{cases}
\]
We denote the eigenvalues of $A(G)$ as $\lambda_1(G) \geq \lambda_2(G) \geq \dots \geq \lambda_n(G)$.
If $G$ is clear from context, we just write $\lambda_1, \dots, \lambda_n$.
These eigenvalues can convey a lot of information about the graph, as illustrated by the following lemmata.
Let $e(S,T)$ denote the number of edges with an endpoint in $S$ and an endpoint in $T$.

\begin{lm}[{\cite[Theorem 5.1]{haemers}} Bipartite expander mixing lemma]
Let $G$ be a bipartite graph, with bipartition $L$ and $R$.
Take sets $S \subseteq L$ and $T \subseteq R$.
Suppose that every vertex in $L$ has degree $d_L$ and every vertex in $R$ has degree $d_R$.
Then
\[
 \left| e(S,T) - \frac{d_L}{|R|} |S| |T| \right| \leq \lambda_2 \sqrt{|S| |T| \left( 1 - \frac{|S|}{|L|} \right)\left( 1 - \frac{|T|}{|R|} \right)}.
\]
\end{lm}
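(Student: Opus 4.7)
The plan is to reduce everything to a singular value inequality for the biadjacency matrix. Writing the vertex set as $L \sqcup R$, the adjacency matrix takes the block form
\[
A(G) = \begin{pmatrix} 0 & B \\ B^T & 0 \end{pmatrix},
\]
where $B$ is the $|L| \times |R|$ biadjacency matrix. I would first note that the nonzero eigenvalues of $A(G)$ are exactly $\pm \sigma$ as $\sigma$ ranges over the nonzero singular values of $B$, so by symmetry of the spectrum about $0$, the second largest eigenvalue $\lambda_2$ of $A(G)$ equals the second largest singular value $\sigma_2$ of $B$. Since $B\mathbf{1}_R = d_L \mathbf{1}_L$ and $B^T \mathbf{1}_L = d_R \mathbf{1}_R$ (with $d_L|L| = d_R|R|$ by double counting), the top singular value is $\sigma_1 = \sqrt{d_L d_R}$, attained by the all-ones vectors.

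Next I would decompose the characteristic vectors along and orthogonal to the all-ones directions:
\[
\mathbf{1}_S = \tfrac{|S|}{|L|}\mathbf{1}_L + x, \qquad \mathbf{1}_T = \tfrac{|T|}{|R|}\mathbf{1}_R + y,
\]
with $x \perp \mathbf{1}_L$ and $y \perp \mathbf{1}_R$, so Pythagoras gives $\|x\|^2 = |S|(1 - |S|/|L|)$ and $\|y\|^2 = |T|(1 - |T|/|R|)$. Observing that $e(S,T) = \mathbf{1}_S^T B \mathbf{1}_T$, I would expand this bilinear form into four summands. The two cross terms vanish because $\mathbf{1}_L^T B$ is a multiple of $\mathbf{1}_R^T$ (which is orthogonal to $y$) and $B\mathbf{1}_R$ is a multiple of $\mathbf{1}_L$ (which is orthogonal to $x$). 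The main term evaluates to
\[
\tfrac{|S||T|}{|L||R|} \cdot \mathbf{1}_L^T B \mathbf{1}_R = \tfrac{|S||T|}{|L||R|} \cdot d_L|L| = \tfrac{d_L}{|R|}|S||T|,
\]
which is precisely the quantity subtracted in the lemma. Finally, the fluctuation $x^T B y$ is bounded by $\sigma_2 \|x\| \|y\| = \lambda_2 \sqrt{|S|(1-|S|/|L|)|T|(1-|T|/|R|)}$ using the variational characterisation of $\sigma_2$ on the orthogonal complement of the top singular vectors.

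The main point to verify carefully, in my view, is the identification $\lambda_2 = \sigma_2$: one needs to check that both $+\sigma_1$ and $-\sigma_1$ lie in the spectrum of $A(G)$ so that the second eigenvalue of $A(G)$ is genuinely $\sigma_2$ rather than some degenerate smaller value. This follows from the block structure, which forces the spectrum of $A(G)$ to be symmetric about zero. The remainder is a routine orthogonal decomposition combined with the extremal property of singular values on the orthogonal complement of the Perron singular vectors.
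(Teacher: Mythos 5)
Your proof is correct. Note that the paper does not prove this lemma at all --- it is quoted verbatim from Haemers \cite{haemers} --- so there is no internal argument to compare against; your singular-value decomposition argument (identify $\lambda_2(A(G))$ with the second singular value $\sigma_2$ of the biadjacency matrix $B$ via the symmetry of the bipartite spectrum, peel off the rank-one Perron part of $\mathbf 1_S^T B \mathbf 1_T$ using biregularity, and bound the remainder by $\sigma_2\|x\|\|y\|$) is the standard proof and every step checks out. The only points worth a line of justification are that $\sqrt{d_L d_R}$ is indeed the \emph{largest} singular value (e.g.\ because $BB^T$ is nonnegative with constant row sums $d_L d_R$, so its spectral radius equals that row sum), and that the bound $|x^T B y| \leq \sigma_2 \|x\|\|y\|$ survives the case where $\sigma_1$ has multiplicity greater than one --- it does, since then $\sigma_2 = \sigma_1$ and the variational bound on the orthogonal complement of $(\mathbf 1_L, \mathbf 1_R)$ is still at most $\sigma_2$.
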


This lemma has an interesting counterpart in non-bipartite graphs, of which we state a specific case.
For a set $S \subseteq V$, let $e(S)$ denote the number of edges with both endpoints in $S$.

\begin{lm}[{\cite[Theorem 3.5]{haemers}} Non-bipartite expander mixing lemma]
Let $G$ be a $d$-regular graph with $n$ vertices.
Then
\begin{align*}
 \frac d n |S|^2 + \lambda_n |S| \left( 1 - \frac{|S|}n \right)
 \leq  2 e(S) \leq 
 \frac d n |S|^2 + \lambda_2 |S| \left( 1 - \frac{|S|}n \right).
\end{align*}
\end{lm}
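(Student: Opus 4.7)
The plan is to rewrite $2e(S)$ as a quadratic form on the adjacency matrix $A = A(G)$, and then use the spectral decomposition of $A$ together with $d$-regularity.

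First I would introduce the characteristic vector $\chi_S \in \{0,1\}^V$ of $S$. By the definition of the adjacency matrix,
\[
 2e(S) = \sum_{x,y \in S} A_{x,y} = \chi_S^T A \chi_S.
\]
Since $G$ is $d$-regular, the all-ones vector $\one$ satisfies $A \one = d \one$, so it is an eigenvector for the top eigenvalue $\lambda_1 = d$. I would then decompose $\chi_S$ orthogonally with respect to this eigenvector. Writing $\chi_S = \alpha \one + v$ with $v \perp \one$, the value of $\alpha$ is forced by $\alpha n = \chi_S^T \one = |S|$, so $\alpha = |S|/n$. Moreover, by Pythagoras,
\[
 |S| = \|\chi_S\|^2 = \alpha^2 n + \|v\|^2, \qquad \text{so } \|v\|^2 = |S|\left(1 - \tfrac{|S|}{n}\right).
\]

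Next I would plug this decomposition into the quadratic form. The cross terms vanish because $A\one = d\one$ and $\one^T v = 0$. This leaves
\[
 2e(S) = \alpha^2 \one^T A \one + v^T A v = \frac{d|S|^2}{n} + v^T A v.
\]
Now $v$ lies in the orthogonal complement of $\one$, which is spanned by eigenvectors of $A$ with eigenvalues among $\lambda_2, \ldots, \lambda_n$ (here one uses that $A$ is symmetric, hence orthogonally diagonalisable). The Rayleigh-Ritz characterisation of the extreme eigenvalues restricted to this invariant subspace gives
\[
 \lambda_n \|v\|^2 \leq v^T A v \leq \lambda_2 \|v\|^2.
\]
Substituting $\|v\|^2 = |S|(1 - |S|/n)$ yields both inequalities simultaneously.

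There is no real obstacle here; the only thing to be mindful of is justifying the Rayleigh-type bound on the subspace $\one^\perp$, which requires the symmetry of $A$ and the existence of an orthonormal eigenbasis, and the multiplicity issue in case $\lambda_1 = \lambda_2$ (which is harmless because then the inequality $v^T A v \leq \lambda_2 \|v\|^2$ still holds on all of $\mathbb{R}^n$). So the proof is essentially a two-line spectral calculation once the decomposition $\chi_S = \tfrac{|S|}{n}\one + v$ has been set up.
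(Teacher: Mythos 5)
The paper does not prove this lemma at all: it is quoted verbatim from the literature with the citation to Haemers, so there is no in-paper argument to compare against. Your proof is correct and complete. It is the standard direct spectral argument: writing $2e(S)=\chi_S^T A\chi_S$, splitting $\chi_S=\frac{|S|}{n}\one+v$ with $v\perp\one$, computing $\|v\|^2=|S|\bigl(1-\frac{|S|}{n}\bigr)$, and bounding $v^TAv$ between $\lambda_n\|v\|^2$ and $\lambda_2\|v\|^2$ via the Rayleigh characterisation on the $A$-invariant subspace $\one^\perp$. You correctly flag the two points that need care, namely that the Rayleigh bound on $\one^\perp$ uses the symmetry of $A$ and that a possible multiplicity $\lambda_1=\lambda_2$ (disconnected $G$) is harmless. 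For what it is worth, the cited source derives this statement differently, via eigenvalue interlacing applied to the $2\times 2$ quotient matrix of the partition $\{S, V\setminus S\}$; that route generalises more readily to non-regular graphs and to finer partitions, whereas your Rayleigh-quotient computation is more elementary and self-contained. Either way, the statement is established.
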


\subsection{Association schemes}

\begin{df}
 Let $\ma = \set{A_0, \dots, A_d}$ be a set of non-zero $n \times n$-matrices over $\CC$ with only $0$ and $1$ as entries.
 We call these matrices a \emph{$d$-class association scheme} if the following properties are satisfied.
 \begin{enumerate}
  \item $A_0 = I_n$,
  \item $A_0 + \dots + A_d = J_n$, the $n \times n$ all-one matrix,
  \item the set $\set{A_0,\dots,A_d}$ is closed under transposition,
  \item there exist numbers $p_{ij}^k$, called \emph{intersection numbers}, such that for every $i,j = 0,\dots,d$, $A_i A_j = \sum_{k=0}^d p_{ij}^k A_k$, and $p_{ij}^k = p_{ji}^k$.
 \end{enumerate}
\end{df}

If the rows and columns of the matrices $A_0, \dots, A_d$ of an association scheme are indexed by some set $X$, then there is a natural correspondence between the matrix $A_i$ and the relation $$R_i = \sett{(x,y) \in X \times X}{A_{xy}=1}.$$
We also say that the relations $R_0, \dots, R_d$ are an association scheme, since they clearly convey the same information.

\bigskip

The algebra $\CC[\ma]$ spanned by $A_0, \dots, A_d$ over $\CC$ is called the \emph{Bose-Mesner algebra} of the association scheme.
It readily follows from the definition of an association scheme that this algebra is $(d+1)$-dimensional.

\begin{thm}
The Bose-Mesner algebra of a $d$-class association scheme $\ma$ has a basis $E_0, E_1, \dots, E_d$ of idempotents such that $E_i E_j = \delta_{ij} E_i$, $E_0 = \frac 1 n J_n$, and $\sum_{i=0}^d E_i = I_n$.
\end{thm}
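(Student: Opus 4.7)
The plan is to exploit the fact that $\mathbb{C}[\ma]$ is a commutative $*$-subalgebra of $M_n(\mathbb{C})$ and apply the spectral theorem for commuting normal matrices. First I would observe commutativity: the symmetry $p_{ij}^k = p_{ji}^k$ built into the definition of an association scheme gives $A_i A_j = A_j A_i$ for all $i,j$. Closure under conjugate transposition follows because the $A_i$ have real entries and $\{A_0, \dots, A_d\}$ is closed under transposition; in particular every $A_i$ is normal and the algebra is a unital commutative $*$-subalgebra of $M_n(\mathbb{C})$.

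Next I would apply the spectral theorem to the commuting family of normal matrices $A_0, \dots, A_d$ to obtain an orthogonal decomposition $\mathbb{C}^n = V_0 \oplus V_1 \oplus \dots \oplus V_m$ into common eigenspaces. Let $E_i$ denote the orthogonal projection onto $V_i$. By construction $E_i E_j = \delta_{ij} E_i$ and $\sum_{i=0}^m E_i = I_n$. Each $E_i$ lies in $\mathbb{C}[\ma]$: on any commuting $*$-closed family one can write the projection onto a common eigenspace as a polynomial in the generators (for instance, by Lagrange interpolation on a generic linear combination of the $A_j$ separating the eigenspaces). Conversely, every $M \in \mathbb{C}[\ma]$ acts as a scalar on each $V_i$, hence $M = \sum_i \mu_i(M) E_i$, so $\{E_0, \dots, E_m\}$ spans $\mathbb{C}[\ma]$. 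Linearly independent as mutually orthogonal nonzero idempotents, they form a basis. Matching dimensions forces $m+1 = d+1$, so we obtain exactly $E_0, \dots, E_d$.

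Finally I would identify the distinguished projection $E_0 = \tfrac{1}{n} J_n$. The key input is that $A_i \one = v_i \one$ for a constant $v_i$, which can be extracted as follows: if $A_{i^*} = A_i^\top$, then the diagonal of $A_i^\top A_i = \sum_k p_{i^* i}^k A_k$ is constantly $p_{i^* i}^0$, and this diagonal entry at vertex $x$ equals the number of neighbours of $x$ in relation $R_i$. Hence every row sum of $A_i$ equals $v_i := p_{i^* i}^0$, and $\one$ spans a common one-dimensional eigenspace. The orthogonal projection onto $\langle \one \rangle$ is $\tfrac{1}{n} \one \one^\top = \tfrac{1}{n} J_n$, which must therefore be one of the $E_i$; labelling it $E_0$ completes the proof.

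The only subtle step is the middle one: rigorously justifying that the spectral projections live inside $\mathbb{C}[\ma]$ and span it. Everything else is a direct calculation, and the argument can be packaged either via the above ad hoc polynomial construction or, more abstractly, by invoking Wedderburn's decomposition of the semisimple commutative $*$-algebra $\mathbb{C}[\ma]$ as a direct sum of copies of $\mathbb{C}$, whose minimal idempotents are exactly the $E_i$.
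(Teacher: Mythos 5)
Your proof is correct. The paper does not actually prove this theorem; it is stated as standard background on association schemes and deferred to the cited references (Godsil--Meagher \S 3, Brouwer--Cohen--Neumaier \S 2), and your argument --- commutativity from $p_{ij}^k=p_{ji}^k$, closure under conjugate transposition, simultaneous diagonalisation of the commuting normal matrices $A_i$, and the identification of the spectral projections as the minimal idempotents of the resulting $(d+1)$-dimensional commutative $*$-algebra --- is precisely the standard proof those references give, so there is nothing to contrast. The one step worth tightening is the claim that $\tfrac 1n J_n$ occurs among the $E_i$: you need the common eigenspace containing $\one$ to be exactly $\vspan{\one}$ and not something larger, which follows at once because $J_n=\sum_i A_i$ lies in the algebra, acts as the scalar $n$ on that eigenspace, and has rank one (the row-sum computation via $p_{ii^*}^0$ then shows $\one$ really is a common eigenvector, as you note).
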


Let $V_i$ denote the column space of $E_i$.
The previous theorem implies that the matrices $A_0, \dots, A_d$ are simultaneously diagonalisable, and each eigenspace of each $A_i$ is a direct sum of some $V_j$'s.

The transition matrix from the $A_i$ basis to the $E_j$ basis is called the \emph{matrix of eigenvalues} (or sometimes the character table) and usually denoted by $P$.

\bigskip

Given a group $G$ of order $n$, let $C_0 = \set{1}, C_1, \dots, C_d$ denote its conjugacy classes.
The relations $R_i = \sett{(x,y) \in G \times G}{y x^{-1} \in C_i}$ constitute an association scheme, sometimes called the \emph{conjugacy class scheme}.
Let $A_0, \dots, A_d$ denote the corresponding $01$-matrices.

The basis of idempotents and their corresponding eigenvalues can be found as follows.
Let $\psi_0 = 1, \psi_1, \dots, \psi_d$ denote the irreducible characters of $G$ over $\CC$.
Define the matrix $E_i$, whose rows and columns are indexed by $G$, as $\displaystyle (E_i)_{x y} = \frac{\psi_i(1)}n {\psi_i(y x^{-1})}$.
Then $E_0, \dots, E_d$ are the basis of idempotents, and $\displaystyle A_i E_j = |C_i| \frac{\overline{\psi_j(c)}}{\psi_j(1)} E_j$ for any $c \in C_i$.
Furthermore, $\dim V_j = \psi_j(1)^2$.

\bigskip

More on association schemes can be found e.g.\ in \cite[\S 3]{godsilmeagher} or \cite[\S 2]{bcn}.
For more on the conjugacy class scheme, see \cite[\S 11]{godsilmeagher}.

\section{Association schemes from ovoidal circle geometries}
 \label{SectionAssoc}

Given a $\cm(\rho,q)$, define the following relations on the circles.
We say that circles $c_1$ and $c_2$ are in relation $R_0, R_1, R_2, R_3$ if $|c_1 \cap c_2|$ equals respectively $q+1$ (i.e.\ $c_1 = c_2$), 1, 2, or 0.
In some circle geometries, these relations constitute an association scheme.
We list these cases, and give the corresponding matrix of eigenvalues, see \cite{adriaensen2021}.

\bigskip

{\bf Möbius planes of even order $\mathbf{q > 2}$.}

\[
P = \begin{pmatrix}
 1 & q^2-1 & \frac{q^2}2 (q+1) & \frac q 2 (q-1)(q-2) \\
 1 & q-1 & -q & 0 \\
 1 & -2 & q \frac{q-1}2 & -(q+1)\frac{q-2}2 \\
 1 & -(q+1) & 0 & q
 \end{pmatrix}
\]

\bigskip

{\bf Ovoidal Laguerre planes of odd order.}

\[
 P = \begin{pmatrix}
1 & q^2-1 & q \frac{q^2-1}2 & q\frac{(q-1)^2}2 \\
1 & -1 & q \frac{q-1}2 & -q \frac{q-1}2 \\
1 & q-1 & -q & 0\\
1 & -(q+1) & 0 & q
 \end{pmatrix}
\]

\bigskip

{\bf Minkowski planes of even order $\mathbf{q > 2}$.}

\[
 P = \begin{pmatrix}
 1 & q^2-1 & q(q+1)\frac{q-2}2 & (q-1)\frac{q^2}2 \\
 1 & q-1 & -q & 0 \\
 1 & -(q+1) & 0 & q \\
 1 & 0 & \frac{q^2-q-2}2 & - (q-1)\frac q 2
 \end{pmatrix}
\]
 
Using the description of an ovoidal Minkowski plane as the graphs of $\pgl(2,q)$, this association scheme and the matrix of eigenvalues can be found in \cite[page 172]{bannai1991}.

\bigskip

{\bf Extended Laguerre plane of even order $\mathbf{q>2}$.}

In this case, we need to drop the relation $R_1$, since it is empty.

\[
 P = \begin{pmatrix}
  1 & \binom{q+2}2 (q-1) & (q-1)^2 \frac q 2 \\
  1 & -\frac{q+2}2 & \frac q 2 \\
  1 & (q+1)\frac{q-2}2 & -(q-1)\frac q 2
 \end{pmatrix}
\]

\bigskip

{\bf Ovoidal Möbius and Minkowski planes of odd order.}

In ovoidal Möbius and Minkowski planes of odd order, the above relations do not constitute an association scheme.
Some of the relations need to be spliced.

Let $q$ be an odd prime power.
Let $\mq^\eps$ denote the elliptic quadric $\mq^-(3,q)$ if $\eps=-1$, and the hyperbolic quadric $\mq^+(3,q)$ if $\eps=+1$.
There exists some symmetric bilinear form $b(x,y)$ on $\FF_q^4$, and an associated quadratic form $\kappa(x) = b(x,x)$, such that $\mq^\eps$ consists of the projective points whose coordinate vectors satisfy $\kappa(x) = 0$.
Define the polarity $\perp$ by mapping a point $P$ with coordinate vector $x$, to the plane $P^\perp$ consisting of the points with coordinate vectors $y$ satisfying $b(x,y)=0$.
Write $P \perp R$ if $P \in R^\perp$ (or equivalently $R \in P^\perp$).
Note that the tangent and oval planes are the planes $P^\perp$ with $P \in \mq^\eps$ respectively $P \notin \mq^\eps$.

\begin{lm}
 \label{LmLPerp}
 Let $l$ be a line in $\pg(3,q)$, not contained in $\mq^\eps$.
 Then
 \[
  | l^\perp \cap \mq^\eps | = \begin{cases}
   2 - | l \cap \mq^\eps | & \text{if } \eps=-1, \\
   | l \cap \mq^\eps | & \text{if } \eps=+1.
  \end{cases}
 \]
\end{lm}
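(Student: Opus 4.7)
The plan is to translate the statement into linear algebra on $\FF_q^4$, and exploit the orthogonal decomposition $\FF_q^4 = V \oplus V^\perp$ whenever it exists. Let $V \leq \FF_q^4$ be the $2$-dimensional subspace whose projectivisation is $l$, so that $V^\perp$ (with respect to the bilinear form $b$) is the subspace corresponding to $l^\perp$. Then $|l \cap \mq^\eps|$ equals the number of $1$-dimensional subspaces of $V$ on which $\kappa$ vanishes, and likewise for $l^\perp$, so the task is reduced to counting isotropic $1$-spaces inside $V$ and $V^\perp$.

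First I would treat the case that $\kappa|_V$ is non-degenerate. Then $V \cap V^\perp = 0$, so $\FF_q^4 = V \oplus V^\perp$ is an orthogonal decomposition and $\kappa$ splits as $\kappa|_V \perp \kappa|_{V^\perp}$. Over $\FF_q$ with $q$ odd, a non-degenerate binary quadratic form is either hyperbolic (with exactly $2$ isotropic $1$-spaces) or anisotropic (with none), and its isomorphism class is detected by its discriminant modulo squares. Discriminants are multiplicative under orthogonal sums, and the discriminant class of $\kappa$ distinguishes $\eps = +1$ from $\eps = -1$. Putting these facts together: when $\eps = +1$, $\kappa|_V$ and $\kappa|_{V^\perp}$ have the same type, giving $|l \cap \mq^+| = |l^\perp \cap \mq^+|$; when $\eps = -1$ they have opposite types, giving $|l \cap \mq^-| + |l^\perp \cap \mq^-| = 2$, exactly as required.

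Next I would handle the degenerate case, i.e.\ $l$ tangent to $\mq^\eps$ at some $P$. Here the radical of $\kappa|_V$ is $\langle P \rangle$, so $V \subseteq P^\perp$, hence $P \in V^\perp$ and thus $P \in l^\perp \cap \mq^\eps$, giving $|l^\perp \cap \mq^\eps| \geq 1$. To conclude $|l^\perp \cap \mq^\eps| = 1$ I must rule out $l^\perp \subseteq \mq^\eps$. For $\eps = -1$ this is automatic, because $\mq^-(3,q)$ contains no lines. For $\eps = +1$ the lines on $\mq^+(3,q)$ are the self-polar ones, so $l^\perp \subseteq \mq^+$ would force $l = (l^\perp)^\perp = l^\perp \subseteq \mq^+$, contradicting the hypothesis. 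The formula then reads $1 = 2 - 1$ when $\eps = -1$ and $1 = 1$ when $\eps = +1$.

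The only real obstacle is the discriminant bookkeeping in the non-degenerate case. I would avoid explicit computation by appealing to the structural fact that over $\FF_q$ ($q$ odd) there are, in each even dimension, exactly two isomorphism classes of non-degenerate quadratic forms, distinguished by a ``type'' $\pm$ that is multiplicative under orthogonal sums. This reduces the entire case analysis to a short table lookup and removes any need to fix a normal form for $\kappa$.
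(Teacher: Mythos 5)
Your proof is correct, but it takes a genuinely different route from the paper's. The paper argues by a purely combinatorial double count: writing $i = |l \cap \mq^\eps|$ and $x$ for the number of tangent planes through $l$, it first notes that $x = |l^\perp \cap \mq^\eps|$ (a tangent plane $Q^\perp$ contains $l$ iff $Q \in l^\perp$), then counts the points of $\mq^\eps$ over the $q+1$ planes through $l$ using the known sizes of tangent and oval plane sections, and solves the resulting linear equation for $x$. You instead work with the quadratic form itself, splitting into $V \cap V^\perp = 0$ (orthogonal decomposition $\kappa = \kappa|_V \perp \kappa|_{V^\perp}$ plus multiplicativity of the type of even-dimensional non-degenerate forms) and $V \cap V^\perp \neq 0$ (the radical is spanned by a singular vector since $v \in V \cap V^\perp$ forces $\kappa(v)=b(v,v)=0$, so $l$ and $l^\perp$ are both tangent at the same point once containment in $\mq^\eps$ is excluded). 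Both arguments are complete. Yours is structurally cleaner and generalizes at once to complementary subspaces of quadrics in higher dimension, but it is tied to odd characteristic (it uses $\kappa(x)=b(x,x)$ and the discriminant classification of binary forms), which happens to be the only setting in which the paper invokes the lemma; the paper's count is more elementary, treats the tangent case uniformly with the others, and needs nothing beyond plane-section sizes. One sentence worth adding to your degenerate case: to get from ``$|l^\perp \cap \mq^\eps| \geq 1$ and $l^\perp \not\subseteq \mq^\eps$'' to ``$|l^\perp \cap \mq^\eps| = 1$'', note that $P \in V$ gives $V^\perp \subseteq P^\perp$, so $l^\perp$ is a line through $P$ inside the tangent plane at $P$ and therefore meets $\mq^\eps$ in $1$ or $q+1$ points; this is just your radical argument applied to $V^\perp$, but as written the step is not justified.
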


\begin{proof}
Let $i$ denote $|l \cap \mq^\eps|$ and let $x$ denote the number of tangent planes through $l$.
Note that if $Q \in \mq^\eps$, then $Q \in l$ if and only if $l^\perp \subset Q^\perp$, thus $x = |l^\perp \cap \mq^\eps|$.
If $\eps=-1$, then every tangent and oval plane through $l$ contains $1-i$ respectively $q+1-i$ points of $\mq^\eps \setminus l$, and $|\mq^\eps|=q^2+1$.
Hence,
\begin{align*}
 i + x(1-i) + (q+1-x)(q+1-i) = q^2+1 && \Longrightarrow && x = 2-i.
\end{align*}
If $\eps=1$, then every tangent plane through $l$ contains $2q+1-i$ points of $\mq^\eps \setminus l$, and $|\mq^\eps|=(q+1)q(q-1)$.
Hence,
\begin{align*}
 i + x(2q+1-i) + (q+1-x)(q+1-i) = (q+1)q(q-1) && \Longrightarrow && x = i.
\end{align*} 
\end{proof}

Take a point $P \notin \mq^\eps$.
Let $x$ be a coordinate vector for $P$.
Then the other coordinate vectors for $P$ are of the form $\alpha x$, $\alpha \in \FF_q^*$.
For every such $\alpha$, $\kappa(\alpha x) = \alpha^2 \kappa(x)$ is a square if and only if $\kappa(x)$ is a square.
Let $S_q$ and $\overline{S_q}$ denote respectively the non-zero squares and the non-squares of $\FF_q$.
Then we write $\kappa(P)=S_q$ if $\kappa(x) \in S_q$, and $\kappa(P) = \overline{S_q}$ if $\kappa(x) \in \overline{S_q}$.
For a scalar $\alpha$ and subsets $A$ and $B$ of $\FF_q$, let $\alpha A B$ denote the set $\sett{\alpha a b}{a \in A, \, b \in B}$.
Given a conic $C$ in $\pg(2,q)$, we call a point $P \notin C$ \emph{external} or \emph{internal} when $P$ lies on 2 respectively 0 tangent lines to $C$.

\begin{lm}
 Take two points $P$ and $R$ in $\pg(3,q)$, not in $\mq^\eps$, with $P \perp R$.
 Then $P$ is external to $R^\perp \cap \mq^\eps$ if and only if
 \[
  - \kappa(P) \kappa(R) =
  \begin{cases}
   \overline{S_q} & \text{if } \eps = -1, \\
   S_q & \text{if } \eps = +1.
  \end{cases}
 \]
\end{lm}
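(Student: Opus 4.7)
The plan is to reduce the question to computing $|\ell \cap \mq^\eps|$ for a particular line $\ell$, to which Lemma \ref{LmLPerp} directly applies. Let $\ell = \vspan{P,R}$, and let $p, r$ be coordinate vectors for $P$ and $R$.

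First I would use the polarity picture in the plane $R^\perp$. Since $R \notin \mq^\eps$, the plane $R^\perp$ is an oval plane, so $\mathcal C := R^\perp \cap \mq^\eps$ is a conic. The polar of the point $P$ with respect to this conic (inside $R^\perp$) is exactly $P^\perp \cap R^\perp = \ell^\perp$. A standard fact about conics in odd characteristic gives that a point $P \notin \mathcal C$ is external to $\mathcal C$ if and only if its polar line meets $\mathcal C$ in two points, and internal if and only if its polar line misses $\mathcal C$. Thus $P$ is external iff $|\ell^\perp \cap \mq^\eps| = 2$, and internal iff $|\ell^\perp \cap \mq^\eps| = 0$. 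Using Lemma \ref{LmLPerp}, $P$ is external iff $|\ell \cap \mq^\eps| = 0$ when $\eps=-1$, and iff $|\ell \cap \mq^\eps| = 2$ when $\eps=+1$.

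Next I would compute $|\ell \cap \mq^\eps|$ algebraically. A general point of $\ell$ has coordinate vector $\alpha p + \beta r$, and because $P \perp R$ we have $b(p,r) = 0$, so
\[
\kappa(\alpha p + \beta r) = \alpha^2 \kappa(p) + \beta^2 \kappa(r).
\]
Since neither $P$ nor $R$ lies on $\mq^\eps$, we need $\beta \neq 0$, and dividing through reduces the equation $\kappa(\alpha p + \beta r) = 0$ to $(\alpha/\beta)^2 = -\kappa(r)/\kappa(p)$. This has two solutions in $\FF_q$ if $-\kappa(P)\kappa(R)$ is a non-zero square (and contributes two points on $\ell \cap \mq^\eps$), and none if it is a non-square. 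Hence $|\ell \cap \mq^\eps| = 2$ iff $-\kappa(P)\kappa(R) \in S_q$, and $=0$ iff $-\kappa(P)\kappa(R) \in \overline{S_q}$.

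Combining the two paragraphs gives the claimed equivalence in both cases. I do not expect a major obstacle: the only place where care is needed is the standard fact that in odd characteristic the polar of an external (respectively internal) point is a secant (respectively skew) line to the conic; but this is classical, and the rest is bookkeeping between the two signs coming from Lemma \ref{LmLPerp}.
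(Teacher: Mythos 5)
Your proposal is correct and follows essentially the same route as the paper: both reduce externality of $P$ to the condition $|(PR)^\perp \cap \mq^\eps| = 2$ (the paper derives your ``standard fact'' in one line from $PQ$ being tangent iff $Q \in P^\perp$), both then invoke Lemma \ref{LmLPerp} to pass to $|PR \cap \mq^\eps|$, and both finish with the same quadratic computation using $b(x,y)=0$. The only cosmetic difference is that you cite the classical polar-line characterisation of external points where the paper proves it directly.
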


\begin{proof}
 $P$ is external to $R^\perp \cap \mq^\eps$ if and only if there are two points $Q_1$ and $Q_2$ in $\mq^\eps$ with $Q_1, Q_2 \in R^\perp$ and $P Q_1, P Q_2$ tangent lines.
 Note that since $P \notin \mq^\eps$, $P Q_i$ is a tangent line if and only if $P \in Q_i^\perp$ if and only if $Q_i \in P^\perp$.
 Therefore, $P$ is external to $R^\perp \cap \mq^\eps$ if and only if $(PR)^\perp$ is a 2-secant to $\mq^\eps$.
 
 Let $x$ and $y$ be coordinate vectors for $P$ and $R$ respectively.
 First suppose that $\eps=-1$.
 Then $(PR)^\perp$ is a 2-secant if and only if $PR$ is a 0-secant.
 Equivalently,
 \[
  \kappa(x + \alpha y) = \kappa(x) + 2 b(x,y) \alpha + \kappa(y) \alpha^2 = 0
 \]
 has no solutions.
 Since $P \perp R$, $b(x,y)=0$, and the above is equivalent to $-\kappa(x)/\kappa(y)$ being a non-square.
 Thus, $P$ is external to $R^\perp \cap \mq^\eps$ if and only if $-\kappa(P)/\kappa(R) = - \kappa(P) \kappa(R) = \overline{S_q}$.
 
 The case $\eps=+1$ works analogously, but then $(PR)^\perp$ is a 2-secant if and only if $PR$ is a 2-secant.
\end{proof}

Hence, given two oval planes $P^\perp$ and $R^\perp$ there are two options.
Either $\kappa(P) = \kappa(R)$ and each point of $(PR)^\perp$ is external in $P^\perp$ if and only if it is external in $R^\perp$.
Or $\kappa(P) \neq \kappa(R)$, and each point of $(PR)^\perp$ is external in $P^\perp$ if and only if it is internal in $R^\perp$.
This has us hoping that if we define relations on the circles of our circle geometry depending on the size of the intersection of two circles and whether they correspond to planes $P^\perp$ and $R^\perp$ with $\kappa(P)=\kappa(R)$ or $\kappa(P)\neq\kappa(R)$, these relations constitute an association scheme.
We note that if $P^\perp$ and $R^\perp$ intersect in a unique point of $\mq^\eps$, then all other points of $(PR)^\perp$ are external in $P^\perp$ and $R^\perp$, which implies that $\kappa(P) = \kappa(R)$.
Thus, this would be a 5-class association scheme.

Unfortunately, it is not straightforward to find all intersection numbers of this association scheme.
However, for $\eps=+1$, we can use the alternative representation of the ovoidal Minkowski plane as graphs of $\pgl(2,q)$.
We can then construct the association scheme described above as a so-called subscheme of the conjugacy class scheme of $\pgl(2,q)$, which means that we need to join together different relations of the conjugacy class scheme.

\bigskip

The characters of $\pgl(2,q)$ can e.g.\ be found in $\cite[\S 3]{meagherspiga}$.
We also give the description here for $q$ odd.
The conjugacy classes of $\pgl(2,q)$ are as follows:
\begin{enumerate}
 \item The identity $I_2$.
 \item The matrices with one linearly independent eigenvector.
 All of these matrices are conjugate to $U = \begin{pmatrix} 1 & 1 \\ 0 & 1 \end{pmatrix}$.
 \item The matrices with two linearly independent eigenvectors.
 Each such matrix is conjugate to a matrix of the form $D_x = \begin{pmatrix} 1 & 0 \\ 0 & x \end{pmatrix}$, $x \neq 0, 1$.
 Note that $D_x$ and $D_y$ are conjugate if and only if $x=y$ or $x=y^{-1}$.
 Thus, this gives us $\frac{q-1}2$ conjugacy classes.
 \item The matrices without eigenvectors.
 Each such matrix is conjugate to a matrix of the form $V_r = \begin{pmatrix} 0 & 1 \\ -r^{q+1} & r + r^q \end{pmatrix}$ for some $r \in \FF_{q^2} \setminus \FF_q$.
 The matrices $V_r$ and $V_s$ are conjugate if and only if $r \FF_q^*$ and $s \FF_q^*$ are equal or each others inverses in $\FF_{q^2}^*/\FF_q^*$.
 This gives us $\frac{q+1}2$ conjugacy classes.
\end{enumerate}

Next we describe the irreducible characters of $\pgl(2,q)$.
Define the map 
\[\delta_q: \FF_q^* \to \set{-1,1}: x \mapsto \begin{cases}
 1 & \text{if } x \in S_q, \\
 -1 & \text{otherwise}.
\end{cases}\]
For every group morphism $\gamma: \FF_q^* \to \CC^*$ of order greater than 2, there is an irreducible character $\eta_\gamma$, and $\eta_\gamma = \eta_{\gamma'}$ if and only if $\gamma = \gamma'$ or $\gamma(x) = \gamma'(x^{-1})$.

For every group morphism $\beta:\FF_{q^2}^*/\FF_q^* \to \CC^*$ of order greater than two, there is an irreducible character $\nu_\beta$, and $\nu_\beta = \nu_{\beta'}$ if and only if $\beta=\beta'$ or $\beta(x) = \beta'(x^{-1})$.

The characters can be read in the following table.
For each conjugacy class, we give a representative, which might have a parameter, list the number of parameters that give a representative of a different conjugacy class, and give the size of the conjugacy classes.
Let $i$ denote an element of the unique coset in $\FF_{q^2}^* / \FF_q^*$ of order 2.

\begin{table}[ht]
    \centering
    \begin{tabular}{c|c|c c c c c c}
    \hline
     & Representative & $I_2$ & $U$ & $D_x$, $x \neq -1$ & $D_{-1}$ & $V_r$, $r \FF_q^* \neq i \FF_q^*$ & $V_i$ \\
     & No. & 1 & 1 & $\frac{q-3}2$ & 1 & $\frac{q-1}2$ & 1 \\
     & Size & 1 & $q^2-1$ & $q(q+1)$ & $q \frac{q+1}2$ & $q(q-1)$ & $q \frac{q-1} 2$ \\ \hline 
     Character & No. \\ \hline 
     $\lambda_1$ & 1 & 1 & 1 & 1 & 1 & 1 & 1 \\
     $\lambda_{-1}$ & 1 & 1 & 1 & $\delta_q(x)$ & $\delta_q(-1)$ & $\delta_{q^2}(r)$ & $\delta_{q^2}(i)$ \\
     $\psi_1$ & 1 & $q$ & 0 & 1 & 1 & $-1$ & $-1$ \\
     $\psi_{-1}$ & 1 & $q$ & 0 & $\delta_q(x)$ & $\delta_q(-1)$ & $-\delta_{q^2}(r)$ & $-\delta_{q^2}(i)$ \\
     $\eta_\beta$ & $\frac{q-1}2$ &  $q-1$ & $-1$ & 0 & 0 & $-(\beta(r)+\beta(r^{-1}))$ & $-2 \beta(i)$ \\
     $\nu_\gamma$ & $\frac{q-3}2$ & $q+1$ & 1 & $\gamma(x) + \gamma(x^{-1})$ & $2\gamma(-1)$ & 0 & 0 \\ \hline
    \end{tabular}
    \caption{Character table of $\pgl(2,q)$, $q$ odd. By slight abuse of notation, $\beta(r)$ denotes $\beta(r \FF_q^*)$.}
    \label{TableCharPGL}
\end{table}

We remark that $\delta_q(x)$ and $\delta_{q^2}(r)$ equal 1 if and only if $D_x$ respectively $V_r$ is an element of $\psl(2,q)$.

\bigskip

To obtain the desired subscheme, we need to know which element of $\pgl(2,q)$ corresponds to which oval plane of $\mq^+(3,q)$ in the two isomorphic representations of the ovoidal Minkowski plane.
This correspondence is very straightforward.
We can choose coordinates of $\pg(3,q)$ such that the quadratic form of $\mq^+(3,q)$ is $\kappa(X_1,X_2,X_3,X_4)=X_1 X_4 - X_2 X_3$.
Then the oval planes are the planes of the form $P^\perp$ with $P = (x_1,x_2,x_3,x_4)$ and $x_1 x_4 - x_2 x_3 \neq 0$.
Then $P^\perp$ corresponds to the element $M_P = \begin{pmatrix} x_1 & x_2 \\ x_3 & x_4 \end{pmatrix}$ of $\pgl(2,q)$.
It follows that $\kappa(P) = \kappa(R)$ if and only if $M_R M_P^{-1} \in \psl(2,q)$.
Furthermore, $|P^\perp \cap R^\perp \cap \mq^+(3,q)|$ equals the number of points $Q$ of $\pg(1,q)$ with $M_P Q = M_R Q$, which equals the number of linearly independent eigenvectors of $M_R M_P^{-1}$.

We describe the relations on the circles of the Minkowski plane in the following table.
For each point $P \notin \mq^+(3,q)$ of $\pg(3,q)$, let $c_P = P^\perp \cap \mq^+(3,q)$ denote its corresponding circle.
We describe the condition on $P$ and $R$ for $(c_P,c_R)$ to be in a certain relation.
We also describe to which element of $\pgl(2,q)$ $M_R^{-1} M_P$ must then be conjugate.

\begin{table}[ht]
    \centering
    \begin{tabular}{|c|c|c|}
    \hline
     Relation & Geometric condition & Conjugate element to $M_R M_P^{-1}$ \\ \hline
     $R_0$ & $P=R$ & $I_2$ \\
     $R_1$ & $|c_P \cap c_R| = 1$ & $U$ \\
     $R_2$ & $|c_P \cap c_R| = 2$, $\kappa(P)=\kappa(R)$ & $D_x$, $x \in S_q \setminus \set 1 $ \\
     $R_3$ & $|c_P \cap c_R| = 2$, $\kappa(P) \neq \kappa(R)$ & $D_x$, $x \in \overline{ S_q}$ \\
     $R_4$ & $|c_P \cap c_R| = 0$, $\kappa(P)=\kappa(R)$ & $V_r$, $r \in S_{q^2} \setminus \FF_q $ \\
     $R_5$ & $|c_P \cap c_R| = 0$, $\kappa(P) \neq \kappa(R)$ & $V_r$, $r \in \overline{S_{q^2}}$ \\
    \hline
    \end{tabular}
    \caption{Relations on the ovoidal Minkowski plane of odd order $q$}
    \label{TableRelations}
\end{table}

Let $A_i$ denote the matrix corresponding to relation $R_i$.
The matrices $A_i$ lie in the Bose-Mesner algebra of the conjugacy class scheme of $\pgl(2,q)$.
For each irreducible character $\psi$ of $\pgl(2,q)$, let $E_\psi$ denote its corresponding idempotent.
We will prove that for each $A_i$, its eigenvalue corresponding to $E_{\eta_\beta}$ (resp.\ $E_{\nu_\gamma}$) is independent of $\beta$ (resp.\ $\gamma$).
This implies that the $A_i$ matrices lie in the span of $E_{\lambda_{1}}$, $E_{\lambda_{-1}}$, $E_{\psi_{1}}$, $E_{\psi_{-1}}$, $E_\eta = \sum_\beta E_{\eta_\beta}$, and $E_\nu = \sum_\gamma E_{\nu_\gamma}$.
Since these matrices are also idempotents, this means that the $A_i$ matrices span a 6-dimensional algebra.
It is easy to check that each $A_i$ is symmetric.
Then the fact that they span a 6-dimensional algebra is sufficient to prove that they constitute an association scheme.

Let $D_{S_q}$, $D_{\overline{S_q}}$, $V_{S_{q^2}}$, and $V_{\overline{S_{q^2}}}$
denote the elements of $\pgl(2,q)$ conjugate to respectively $D_x$ with $x \in S_q \setminus \set 1$, $D_x$ with $x \in \overline{S_q}$, $V_r$ with $r \in S_{q^2} \setminus \FF_q$, and $V_r$ with $r \in \overline{S_{q^2}}$.
Then we need to prove that for each $C \in \{D_{S_q},D_{\overline{S_q}},V_{S_{q^2}},V_{\overline{S_{q^2}}}\}$, the value $\sum_{M \in C} \eta_\beta(M)$ does not depend on the choice of $\beta$ and $\sum_{M \in C} \nu_\gamma(M)$ does not depend on $\gamma$.
This only leaves a few non-trivial cases to check.
We use the following lemma, which is a variation on a well-known property.

\begin{lm}
Let $G$ be a finite group, $H \leq G$, and $\varphi:G \to \CC^*$ be a morphism of order greater than $|G:H|$.
Then for each coset $S$ of $H$, $\sum_{x \in S} \varphi(x) = 0$.
\end{lm}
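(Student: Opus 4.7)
The plan is to reduce the sum over an arbitrary coset to a sum over $H$ and then show that $\varphi$ restricted to $H$ is non-trivial, so that standard orthogonality applies.

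First I would write $S = gH$ for some $g \in G$, so that
\[
 \sum_{x \in S} \varphi(x) = \sum_{h \in H} \varphi(gh) = \varphi(g) \sum_{h \in H} \varphi(h).
\]
Since $\varphi(g) \neq 0$, it suffices to prove $\sum_{h \in H} \varphi(h) = 0$.

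Next I would show that $\varphi|_H$ is not the trivial character. Suppose for contradiction that $\varphi(h) = 1$ for every $h \in H$. Then $\ker \varphi \supseteq H$, so $|G : \ker \varphi| \leq |G : H|$. Because $\varphi$ maps into the abelian group $\CC^*$, its image is cyclic and isomorphic to $G/\ker\varphi$, hence the order of $\varphi$ equals $|G : \ker\varphi| \leq |G : H|$, contradicting the hypothesis. Therefore there exists $h_0 \in H$ with $\varphi(h_0) \neq 1$.

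Finally, using the translation trick on the subgroup $H$,
\[
 \varphi(h_0) \sum_{h \in H} \varphi(h) = \sum_{h \in H} \varphi(h_0 h) = \sum_{h \in H} \varphi(h),
\]
so $(\varphi(h_0) - 1) \sum_{h \in H} \varphi(h) = 0$, which forces $\sum_{h \in H} \varphi(h) = 0$. Combined with the first step, this gives $\sum_{x \in S} \varphi(x) = 0$, as desired. The argument is essentially routine; the only subtle point is recognising that ``order of $\varphi$'' must be interpreted as $|G : \ker \varphi|$, so that the hypothesis directly forbids $H$ from being contained in $\ker \varphi$.
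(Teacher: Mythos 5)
Your proof is correct and follows essentially the same route as the paper's: both arguments hinge on producing an element $h_0 \in H$ with $\varphi(h_0) \neq 1$ (via the observation that $H \subseteq \ker\varphi$ would force the order of $\varphi$ to be at most $|G:H|$) and then exploiting translation invariance of the sum. The only cosmetic difference is that you first factor out $\varphi(g)$ to reduce to the sum over $H$, whereas the paper applies the translation trick directly to the coset sum.
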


\begin{proof}
There is some element $h \in H$ with $\varphi(h) \neq 1$.
Otherwise, $H \leq \text{Ker}(\varphi)$, and the order of $\text{Im}(\varphi)$ is at most $|G:H|$, contradicting that the order of $\varphi$ is greater than $|G:H|$.
Take a right coset $S = Hg$ of $H$.
Then
\[
 \sum_{x \in S} \varphi(x) = \sum_{x \in H} \varphi(x g) = \sum_{x \in H} \varphi(h x g)
 = \varphi(h) \sum_{x \in H} \varphi(x g) = \varphi(h) \sum_{x \in S} \varphi(x).
\]
Since $\varphi(h)\neq 1$, this implies that $\sum_{x \in S} \varphi(x) = 0$.
Analogous for left cosets of $H$.
\end{proof}

Now take a morphism $\gamma: \FF_q^* \to \CC^*$ or order greater than 2.
\begin{align*}
 \sum_{M \in D_{S_q}} \nu_\gamma(M) & = q(q+1) \sum_{x \in S_q \setminus \set 1} \gamma(x)
 = q(q+1) \left( \sum_{x \in S_q} \gamma(x) - \gamma(1) \right) = - q(q+1), \\
 \sum_{M \in D_{\overline{S_q}}} \nu_\gamma(M) & = q(q+1) \sum_{x \in \overline{S_q}} \gamma(x) = 0.
\end{align*}
Take a morphism $\beta: \FF_{q^2}^*/\FF_q^* \to \CC^*$ of order greater than 2.
\begin{align*}
 \sum_{M \in V_{S_{q^2}}} \eta_\beta(M) & = q(q-1) \sum_{\substack{r \FF_q^* \in \FF_{q^2}^*/\FF_q^* \\ r \in S_{q^2} \setminus \FF_q^*}} \beta(r) = -q(q-1), \\
 \sum_{M \in V_{\overline{S_{q^2}}}} \eta_\beta(M) & = q(q-1) \sum_{\substack{r \FF_q^* \in \FF_{q^2}^*/\FF_q^* \\ r \in \overline{S_{q^2}} }} \beta(r) = 0.
\end{align*}

We can now easily calculate the eigenvalue matrix of this subscheme from the formula for the eigenvalues of the conjugacy class scheme.
This yields
\[
 P = \begin{pmatrix}
  1 & q^2-1 & q \frac{(q-3)(q+1)}4 & q\frac{q^2-1}4 & q \frac{(q-1)^2}4 & q\frac{q^2-1}4 \\
  1 & q^2-1 & q \frac{(q-3)(q+1)}4 & -q\frac{q^2-1}4 & q \frac{(q-1)^2}4 & -q\frac{q^2-1}4 \\
  1 & 0 & \frac{(q-3)(q+1)}4 & \frac{q^2-1}4 & -\frac{(q-1)^2}4 & -\frac{q^2-1}4 \\
  1 & 0 & \frac{(q-3)(q+1)}4 & -\frac{q^2-1}4 & -\frac{(q-1)^2}4 & \frac{q^2-1}4 \\
  1 &  -(q+1) & 0 & 0 & q & 0 \\
  1 & q-1 & -q & 0 & 0 & 0
 \end{pmatrix}
\]

\begin{rmk}
One can also deduce from the character table that the dimensions of the eigenspaces are $1,1,q^2,q^2,\frac{(q-1)^3}2,(q+1)^2\frac{q-3}2$.
If $q=3$, the last eigenspace has dimension 0, since $S_3 \setminus \set 1 = \emptyset$.
On the other hand, relation $R_2$ is empty, since there are no elements of $\pgl(3,q)$ conjugate to $D_x$, $x \in S_3 \setminus \set 1$.
Thus, if $q=3$ we get a 4-class association scheme.
\end{rmk}

\section{A point not lying on few circles}
 \label{SectionMany}

As a first step to characterise large intersecting families $\mf$, we prove that some point lies on ``not few'' circles of $\mf$.

\begin{df}
 \label{DfGi}
 For an incidence structure $(\mp,\mb)$, define the \emph{$i$-intersecting graph} as the graph with vertices $\mb$, where two blocks are adjacent if and only if they intersect in exactly $i$ points.
 We will denote this graph by $G_i$.
\end{df}

The key ingredient in this section is that the $G_1$ graphs for the known circle geometries have good expanding properties.
By this we mean that given two vertices, the number of their common neighbours can not deviate too much from the average number of common neighbours.

In an ovoidal Laguerre plane of even order $q$, the graph $G_1$ is a union of $q$ disjoint copies of the complete graph $K_{q^2}$.
This graph does not have good expanding properties.
This issue can be resolved by switching to the extended Laguerre plane, for which $G_1$ is the empty graph on $q^3$ vertices.
However, this example illustrates that we cannot prove for general circle geometries that $G_1$ has good expanding properties.
Therefore, we will restrict ourselves to the known circle geometries, where we have more control over $G_1$.

\begin{lm}
 \label{LmDegreeG1}
The 1-intersecting graph $G_1$ of a $\cm(\rho,q)$ is $(q^2-1)$-regular.
\end{lm}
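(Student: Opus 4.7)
The strategy is a standard double-counting argument on pairs (point of $c$, circle through that point that meets $c$ only there), using only the parameters of a $\cm(\rho,q)$ that are listed in the Preliminaries.

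Fix a circle $c$. The plan is to count, for each point $P\in c$, the number of circles $c'\neq c$ that meet $c$ in exactly the point $P$, and then sum over $P$. First I would note that for any two distinct circles $c$ and $c'$ we have $|c\cap c'|\in\{0,1,2\}$: indeed, all points of a circle are pairwise non-parallel by axiom 3 (any circle meets each parallel class in one point), so three common points would be three pairwise non-parallel points lying on both circles, contradicting the uniqueness in axiom 1. Consequently, for a fixed $P\in c$, every circle $c'\neq c$ through $P$ satisfies $|c\cap c'|\in\{1,2\}$.

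Now I would partition the $q^2+(1-\rho)q-1$ circles through $P$ other than $c$ according to whether they meet $c$ in one or two points. For each of the $q$ other points $Q\in c$, the pair $P,Q$ is non-parallel, so by the parameters there are $q+1-\rho$ circles through $P$ and $Q$, one of them being $c$, leaving $q-\rho$ further circles. These contributions are disjoint across different $Q$, because a circle $c'$ sharing two named points $Q_1,Q_2$ of $c$ with $P$ would share three points with $c$, forcing $c'=c$. Therefore the number of circles through $P$ meeting $c$ in exactly two points is $q(q-\rho)$, and the number meeting $c$ only in $P$ is
\[
q^2+(1-\rho)q-1-q(q-\rho)=q-1.
\]

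Summing over the $q+1$ points of $c$ yields $(q+1)(q-1)=q^2-1$ pairs $(P,c')$ where $c'\neq c$ and $c\cap c'=\{P\}$. Since each such $c'$ is counted exactly once (it hits $c$ in a single point), the degree of $c$ in $G_1$ equals $q^2-1$, independently of $\rho$. The only part that requires any care is the justification that $|c\cap c'|\le 2$, which I would write out explicitly so that the argument is uniform across Möbius, Laguerre, and Minkowski planes and is not affected by the presence of parallel classes.
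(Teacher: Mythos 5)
Your proof is correct, but it reaches the count of $q-1$ tangent circles at each point $P\in c$ by a different route than the paper. The paper passes to the affine residue at $P$ (an affine plane of order $q$) and identifies the circles tangent to $c$ at $P$ with the lines parallel to, but distinct from, $c\setminus\set{P}$, of which there are $q-1$; this is short but implicitly relies on the fact that the residue's line set is well-behaved (in particular that the lines parallel to $c\setminus\set P$ in the residue all come from circles rather than from parallel classes). You instead avoid the residue altogether and compute the tangent count by complementation: from the $q^2+(1-\rho)q-1$ circles through $P$ other than $c$ you subtract the $q(q-\rho)$ secant circles, obtained by running over the $q$ other points $Q$ of $c$ and using the parameter ``$q+1-\rho$ circles through two non-parallel points,'' with disjointness across the $Q$'s guaranteed by the three-point axiom. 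Your approach is more self-contained (it uses only the listed parameters and axioms 1 and 3, and your explicit justification that $|c\cap c'|\le 2$ is a worthwhile addition), at the cost of being longer; the paper's is slicker but leans on the structural fact about residues stated in the preliminaries. Both are valid and give the same degree $q^2-1$.
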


\begin{proof}
Take a circle $c$.
For any point $P \in c$, the number of circles that intersect $c$ exactly in $P$ equals the number of lines parallel to (but distinct from) $c \setminus \set P$ in the affine residue at $P$.
This number is $q-1$.
Since there are $q+1$ choices for $P$, there are $(q+1)(q-1)$ circles intersecting $c$ in exactly one point.
\end{proof}

The main idea behind this section is the following elementary counting argument.

\begin{lm}
 \label{LmPointNotOnFew}
Let $\mf$ be an intersecting family in a $\cm(\rho,q)$ of size $f$.
Consider the induced subgraph $G_1[\mf]$ of $G_1$ on $\mf$.
Suppose that $G_1[\mf]$ has at most $E$ edges.
Then there exists a point that lies on at least
\[
 \frac{2 f + (q-1) - \frac{2 E}f}{q+1}
\]
circles of $\mf$.
\end{lm}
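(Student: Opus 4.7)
The plan is a straightforward double counting. For each point $P$, let $d_P$ denote the number of circles in $\mf$ containing $P$. First I would record the trivial identity
\[
 \sum_{P \in \mp} d_P = (q+1) f,
\]
obtained by counting incidences (point, circle) in $\mf$ and using that every circle has $q+1$ points.

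Next I would count pairs of the form $(P, \{c_1, c_2\})$ where $c_1, c_2 \in \mf$ are distinct and $P \in c_1 \cap c_2$. On the one hand this count equals $\sum_P \binom{d_P}{2}$. On the other hand, every pair of distinct circles contributes exactly $|c_1 \cap c_2|$ to this sum; here I use the key input that in a circle geometry two distinct circles meet in at most two points, so since $\mf$ is intersecting each pair contributes either $1$ or $2$. If there are $E$ pairs meeting in one point, then there are $\binom{f}{2} - E$ pairs meeting in two points, giving
\[
 \sum_{P \in \mp} \binom{d_P}{2} = E + 2 \Big( \binom{f}{2} - E \Big) = f(f-1) - E.
\]
Combining these two identities yields
\[
 \sum_{P \in \mp} d_P^2 = 2 f^2 + (q-1) f - 2 E.
\]

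Finally I would set $M = \max_P d_P$. Since $d_P \leq M$ for every $P$,
\[
 \sum_{P \in \mp} d_P^2 \leq M \sum_{P \in \mp} d_P = M (q+1) f,
\]
so
\[
 M \geq \frac{\sum_P d_P^2}{(q+1) f} = \frac{2 f^2 + (q-1) f - 2 E}{(q+1) f} = \frac{2 f + (q-1) - \frac{2E}{f}}{q+1},
\]
which is exactly the claimed bound. There is no real obstacle here; the only conceptual point is noticing that the hypothesis on $G_1[\mf]$ interacts with the intersecting condition precisely because two distinct circles meet in $0$, $1$, or $2$ points, so forbidding $0$ makes the second moment of the degree sequence a clean linear function of $f$ and $E$.
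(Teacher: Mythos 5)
Your argument is correct, and it reaches the stated bound by a genuinely different decomposition than the paper. The paper first averages over the circles of $\mf$ to find a single circle $c$ whose degree in $G_1[\mf]$ is at most $\frac{2E}{f}$, then double counts incidences between the $q+1$ points of that one circle and the circles of $\mf$ (each of which meets $c$ in $1$, $2$, or $q+1$ points), and finally averages over the points of $c$. You instead work globally: you compute the first moment $\sum_P d_P = (q+1)f$ and, via the pair count, the second moment $\sum_P d_P^2 = 2f^2+(q-1)f-2E$, and conclude with $\max_P d_P \geq \sum_P d_P^2 / \sum_P d_P$. The two are linked by the identity $\sum_P d_P^2 = \sum_{c\in\mf}\big(2f+(q-1)-n_1(c)\big)$, where $n_1(c)$ is the degree of $c$ in $G_1[\mf]$; your computation is the sum over all $c$ of the paper's per-circle count, and your max-over-average step plays the role of the paper's choice of a low-degree circle. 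Your version is arguably cleaner and more symmetric; the paper's version additionally exhibits an explicit circle containing the heavy point, though that extra information is not used later. Two small points of hygiene: you treat the number of tangent pairs as exactly $E$ rather than at most $E$, but since the final bound is decreasing in $E$ the inequality goes the right way, so this is harmless; and the fact that two distinct circles meet in at most two points (so that an intersecting pair contributes $1$ or $2$) follows from the axioms that three pairwise non-parallel points lie on a unique circle and that no two points of a circle are parallel.
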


\begin{proof}
The sum of the degrees in $G_1[\mf]$ equals at most $2 E$, thus there exists a circle $c$ that has degree at most $\frac{2 E} f$ in $G_1[\mf]$.
Let $n_i$ denote the number of circles of $\mf$ intersecting $c$ in exactly $i$ points.
Then $n_{q+1}=1$, $n_2 = f - 1 - n_1$, and $n_1 \leq \frac{2 E} f$.
By performing a double count, we see that
\[
 |\sett{(P,c') \in c \times \mf}{P \in c'}| = \sum_{i} i \, n_i
 = (q+1) + n_1 + 2(f-1-n_1)
 = 2 f + (q-1) - n_1.
\]
Thus, some point of $c$ lies on at least
\[
 \frac{2 f + (q-1) - n_1}{q+1} \geq \frac{2 f + (q-1) - \frac{2E}f}{q+1}
\]
circles of $\mf$.
\end{proof}

We will use the notation from Lemma \ref{LmPointNotOnFew} throughout this section.

\bigskip

{\bf Extended Laguerre planes.}

In an extended Laguerre plane, $E=0$.

\bigskip

{\bf Ovoidal circle geometries with 3-class association schemes.}

There are three types of ovoidal circle geometries that give rise to 3-class association schemes, namely Möbius and Minkowski planes of even order and Laguerre planes of odd order.
For each of these circle geometries, as can be seen from the previous section, $\lambda_2(G_1) = q-1$.
By the non-bipartite expander mixing lemma,
\begin{align*}
 \label{EqEMLG1}
 2 E \leq \frac{q^2-1}b |\mf|^2 + (q-1) |\mf| \left(1 - \frac{|\mf|}{b} \right)
 = q \frac{q-1}b |\mf|^2 + (q-1) |\mf|,
\end{align*}
with $b = q^3 + (1-\rho)q$.

\bigskip

{\bf Möbius and Minkowski planes of odd order.}

First, we prove that it suffices to check the ovoidal circle geometries.
Recall Construction \ref{ConstrMinkowski}.

\begin{lm}
 \label{LmG1MinkowskiIsomorphic}
Let $q$ be an odd prime power.
The 1-intersecting graphs of $\cm(2,q,\varphi)$ are isomorphic for all field automorphisms $\varphi$.
\end{lm}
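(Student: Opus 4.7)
The plan is to parameterise the circles of $\cm(2,q,\varphi)$ by $\pgl(2,q)$ via $M \mapsto c_M$, and argue that the adjacency relation defining $G_1$ admits a description purely in terms of the group $\pgl(2,q)$, so does not depend on $\varphi$.

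First I would split pairs $(M,N)$ according to whether they lie in the same coset of $\psl(2,q)$ (of index $2$ since $q$ is odd). If $M$ and $N$ are in the same coset, then $f_M$ and $f_N$ carry the same twist (both the identity or both $\varphi$) in Construction~\ref{ConstrMinkowski}, so $f_M(a)=f_N(a)$ reduces to $Ma^{\varepsilon}=Na^{\varepsilon}$, i.e.\ $a^{\varepsilon}$ is a fixed point of $M^{-1}N \in \psl(2,q)$. Since $\varepsilon$ is a bijection on $\pg(1,q)$, $|c_M \cap c_N|$ equals the number of fixed points of $M^{-1}N$, independently of $\varphi$; in particular $|c_M\cap c_N|=1$ precisely when $M^{-1}N$ is conjugate in $\pgl(2,q)$ to $U$.

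For the cross-coset case I would avoid a direct analysis and instead invoke Lemma~\ref{LmDegreeG1}: $G_1$ is $(q^2-1)$-regular. The $\pgl(2,q)$-conjugacy class of $U$ has size $q^2-1$ and lies entirely in $\psl(2,q)$ (as $\psl(2,q)$ is normal in $\pgl(2,q)$ and contains $U$), so the neighbours of any $M$ inside its own coset already number $q^2-1$ and saturate the degree. No cross-coset pair $(M,N)$ can therefore have $|c_M\cap c_N|=1$. It follows that $G_1(\cm(2,q,\varphi))$ is the disjoint union of two isomorphic copies of the Cayley graph on $\psl(2,q)$ with connection set the $\pgl(2,q)$-conjugacy class of $U$, a description that makes no reference to $\varphi$. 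The only genuine obstacle would have been counting $a \in \pg(1,q)$ with $Pa=a^\varphi$ for $P \notin \psl(2,q)$---a messy semilinear problem---which the accounting via Lemma~\ref{LmDegreeG1} neatly avoids.
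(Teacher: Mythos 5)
Your proof is correct and matches the paper's argument in essence: both reduce same-coset adjacency to the fixed-point count of $M^{-1}N$ (hence to conjugacy with $U$), and both dispose of cross-coset pairs by observing that the $(q^2-1)$-regularity of $G_1$ (Lemma~\ref{LmDegreeG1}) is already saturated by same-coset neighbours. The paper phrases this as an edge-preserving embedding of the $1$-intersecting graph of $\cm(2,q,\mathrm{id})$ into that of $\cm(2,q,\varphi)$ between graphs of equal valency, whereas you make the count of the conjugacy class of $U$ explicit, but the content is identical.
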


\begin{proof}
Take a field automorphism $\varphi$.
We prove that the $G_1$ graph of $\cm(\rho,q,\varphi)$ is isomorphic to the $G_1$ graph of $\cm(\rho,q,\text{id})$.
Take $M \in \pgl(2,q)$.
Let $f_M$ be as in Construction \ref{ConstrMinkowski} with field automorphism $\varphi$.
If $M$ and $N$ are in $\psl(2,q)$, then $f_M$ and $f_N$ are adjacent in $G_1$ if and only if $M N^{-1}$ has a unique fixed point.
If $M$ and $N$ are both not in $\psl(2,q)$, then $f_M$ and $f_N$ are adjacent in $G_1$ if and only if there is a unique projective point $P$ with $M P^\varphi = N P^\varphi$.
This is also equivalent to $M N^{-1}$ having a unique fixed point.
So the map $M \mapsto f_M$ is an embedding of the 1-intersecting graph of $\cm(2,q,\text{id})$ into the 1-intersecting graph of $\cm(2,q,\varphi)$.
Since these graphs have the same valency, it is an isomorphism.
\end{proof}

Now consider the ovoidal Möbius and Minkowski planes of odd order.
It is known, see e.g.\ Hartmann \cite{hartmann}, that the automorphism group of such a circle geometry works transitively on the circles.
Thus, $G_1$ is vertex transitive.
Furthermore, we know that $G_1$ has (at least) two connected components, namely the points $P$ with $\kappa(P)=S_q$ and $\kappa(P)=\overline{S_q}$.
That there are no more than two connected components, can be seen as follows.
In a regular graph, the number of connected components equals the multiplicity of the valency as eigenvalue.
For the ovoidal Minkowski plane of odd order, consider the matrix $P$ of eigenvalues of its 5-class association scheme.
We see that the column of $P$ corresponding to $G_1$ has $q^2-1$ in its first two rows, and both rows correspond to an eigenspace with dimension 1.
For the ovoidal Möbius plane of order $q$, the eigenvalues of the $G_1$ graph restricted to the planes $P^\perp$ with $\kappa(P)=S_q$ can be found in \cite[Table VIII and IX]{bannaihaosong}, in the unique column with valency $q^2-1$ (with $m=2$ in the notation of the paper).

Thus, in both cases, $G_1$ consists of two isomorphic connected components, which means its adjacency matrix (after reordering if necessary) is of the form $A(G_1) = \begin{pmatrix} M & 0 \\ 0 & M \end{pmatrix}$, where $M$ is the adjacency matrix of a connected component.
Moreover, the eigenvalues of $A(G_1)$ are the eigenvalues of $M$ with their multiplicities doubled.
Let $C_1$ denote a connected component of $G_1$.
Then $\lambda_2(C_1) = q-1$ (see the matrix of eigenvalues in the previous section for Minkowski planes or \cite{bannaihaosong} for the Möbius planes).
Now suppose that $|\mf| = f$, and that there are $s$ circles of $\mf$ in $C_1$, then we apply the expander mixing lemma to both components of $G_1$ and find $2E$ is bounded by
\begin{align*}
 &\max_{0 \leq s \leq f} \left[
 \left(\frac {2(q^2-1)}b s^2 + (q-1) s \left(1 - \frac{2s}{b} \right)  \right) + 
 \left(\frac {2(q^2-1)}b (f-s)^2 + (q-1) (f-s) \left(1 - \frac{2(f-s)}{b} \right)  \right) \right]\\
 & = 2 q\frac{q-1} b |\mf|^2 + (q-1) |\mf|.
\end{align*}

This leaves us with the following proposition.

\begin{prop}
 \label{PropMany}
Consider a $\cm(\rho,q)$ that is either ovoidal or from Construction \ref{ConstrMinkowski}.
Let $\mf$ be an intersecting family.
Define
\[
 a = \begin{cases}
  0 & \text{if } \rho=1,\text{ and $q$ is even}, \\
  1 & \text{if } \rho=0,2 \text{ and $q$ is even, or $\rho=1$ and $q$ odd}, \\
  2 & \text{if $\rho=0,2$ and $q$ is odd.}
 \end{cases}
\]
Then some point lies on at least
\[
 \left(2-a \frac{q-1}{q^2+1-\rho}\right) \frac{|\mf|}{q+1}
\]
circles of $\mf$.
\end{prop}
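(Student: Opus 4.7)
The plan is to apply Lemma \ref{LmPointNotOnFew} in each of the three regimes $a\in\{0,1,2\}$; the only work is to produce a suitable bound on the edge count $E$ of the induced subgraph $G_1[\mf]$. Writing $f = |\mf|$ and using $b = q(q^2+1-\rho)$, whenever we establish $2E \leq aq(q-1)f^2/b + (q-1)f$, substitution into Lemma \ref{LmPointNotOnFew} yields the coefficient $2 - a(q-1)/(q^2+1-\rho)$ after routine cancellation (using $q/b = 1/(q^2+1-\rho)$). So the task reduces to verifying the edge bound in each regime.

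For $a=0$ (so $\rho=1$ and $q$ is even) I would first pass to the extended Laguerre plane, in which any two distinct circles meet in $0$ or $2$ points; hence $G_1$ is edgeless, $E=0$, and Lemma \ref{LmPointNotOnFew} directly yields $(2f+(q-1))/(q+1) \geq 2f/(q+1)$. For $a=1$ (the three ovoidal geometries whose natural relations form the 3-class association schemes listed in Section \ref{SectionAssoc}), the eigenvalue matrix $P$ computed above exhibits $\lambda_2(G_1) = q-1$, while $G_1$ is $(q^2-1)$-regular on $b$ vertices by Lemma \ref{LmDegreeG1}. Applying the non-bipartite expander mixing lemma to the subset $\mf$ then gives exactly $2E \leq q(q-1)f^2/b + (q-1)f$, as needed.

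For $a=2$ (Möbius and Minkowski planes of odd order) I would start by invoking Lemma \ref{LmG1MinkowskiIsomorphic} to reduce the non-ovoidal Minkowski subcase to the ovoidal one. In the ovoidal odd-order setting, $G_1$ has exactly two isomorphic connected components of size $b/2$, indexed by the value of $\kappa(\cdot)$, and each component still has second eigenvalue $q-1$; both facts were extracted from the spectral data discussed before the statement (the column of $P$ for $G_1$ in the Minkowski case, and the analogous entry of the Bannai--Hao--Song table for the Möbius case). Distributing $\mf$ as $s + (f-s)$ across the two components and applying the expander mixing lemma on each gives $2E \leq 2q(q-1)[s^2+(f-s)^2]/b + (q-1)f$, which is maximised on $[0,f]$ at the endpoints, giving $2E \leq 2q(q-1)f^2/b + (q-1)f$. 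The only genuine subtlety is the two-component claim with the sharp second eigenvalue on each component; once that is in hand, everything else is algebraic bookkeeping encoded by the master formula in the first paragraph.
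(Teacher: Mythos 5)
Your proposal is correct and follows essentially the same route as the paper: Lemma \ref{LmPointNotOnFew} combined with the three edge bounds on $G_1[\mf]$ ($E=0$ after passing to the extended Laguerre plane; the non-bipartite expander mixing lemma with $\lambda_2(G_1)=q-1$ for the three 3-class-scheme geometries; and, for odd-order Möbius and Minkowski planes, the reduction of $\cm(2,q,\varphi)$ to the ovoidal case via Lemma \ref{LmG1MinkowskiIsomorphic} followed by the expander mixing lemma applied to each of the two connected components and maximisation over the split). The unified "master formula" with the parameter $a$ is just a tidier packaging of the same computation the paper carries out case by case.
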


We end with a little note on some of the $G_1$ graphs.

\begin{lm}
 \label{LmDeza}
 Let $q$ be odd.
 Consider a connected component $C_1$ of the 1-intersecting graph of an ovoidal Möbius plane, or a $\cm(2,q,\varphi)$, of odd order $q$.
 Take two distinct circles $c_1$ and $c_2$, which are vertices in $C_1$.
 Then the number of common neighbours of $c_1$ and $c_2$ equals
 \[
  \begin{cases}
   2(q-1) & \text{if } |c_1 \cap c_2| \in \set{1,2}, \\
   2(q+1) & \text{if } |c_1 \cap c_2| = 0.
  \end{cases}
 \]
 Therefore, $C_1$ is a Deza graph\footnote{A \emph{Deza graph} is a graph in which the number of common neighbours of distinct vertices can only take two values, see \cite{erickson+}.}.
\end{lm}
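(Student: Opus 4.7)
The plan is to compute the number of common $G_1$-neighbours of $c_1,c_2 \in C_1$ by decomposing $A_1^2$ inside the Bose--Mesner algebra of the association scheme from Section \ref{SectionAssoc}. Since $G_1$ splits as the disjoint union of two isomorphic components, any common $G_1$-neighbour of two circles in $C_1$ lies again in $C_1$, so nothing is lost by working in the whole scheme. In the notation of Table \ref{TableRelations}, the relations occurring between two circles of $C_1$ are $R_0, R_1, R_2, R_4$: indeed $R_3$ and $R_5$ relate circles with different $\kappa$-value and so cross components. The number of common $G_1$-neighbours of two $R_k$-related circles is exactly the intersection number $p_{11}^k$, so the task reduces to showing $p_{11}^1 = p_{11}^2 = 2(q-1)$ and $p_{11}^4 = 2(q+1)$.

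I would first treat the Minkowski case. Write $A_1^2 = a_0 A_0 + a_1 A_1 + a_2 A_2 + a_4 A_4$; the coefficients of $A_3$ and $A_5$ vanish because a walk of length two in $G_1$ must stay inside one component. Equating eigenvalues on each common eigenspace $V_i$ yields
\[
 a_0 + a_1 P_{i1} + a_2 P_{i2} + a_4 P_{i4} = P_{i1}^2.
\]
Taking one representative $i$ from each pair of repeated rows in the matrix $P$ of Section \ref{SectionAssoc} produces a $4\times 4$ linear system, whose unique solution is $a_0 = q^2-1$ (the valency, as expected), $a_1 = a_2 = 2(q-1)$ and $a_4 = 2(q+1)$. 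This is exactly the claimed Deza behaviour.

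For the ovoidal Möbius plane of odd order the analogous $5$-class scheme is not constructed explicitly in the paper, but the eigenvalues of $A(G_1|_{C_1})$ are tabulated in \cite{bannaihaosong}. The same linear-algebra argument, carried out on the restricted scheme on $C_1$ with its four relations (identity, $1$-intersection, $2$-intersection within the same $\kappa$-class, and $0$-intersection within the same $\kappa$-class), again delivers $a_1 = a_2 = 2(q-1)$ and $a_4 = 2(q+1)$. A self-contained alternative is to count directly in $\pg(3,q)$: a common $G_1$-neighbour of $c_P$ and $c_R$ corresponds to a point $S \notin \mq^-$ such that both $PS$ and $RS$ are tangent to $\mq^-$; since a point outside $\mq^-$ lies on exactly $q+1$ tangent lines, and Lemma \ref{LmLPerp} controls how the tangents through $P$ interact with those through $R$, one can obtain the desired counts by case-splitting on whether $|c_P \cap c_R|$ is $0$, $1$, or $2$.

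The hard part will be the Möbius case: one needs either to extract eigenvalues from \cite{bannaihaosong} and reconstruct the small association scheme on $C_1$, or to execute the tangent-line count in $\pg(3,q)$ carefully, in particular verifying the subcase $\kappa(P) \neq \kappa(R)$ where the count should be $0$, consistent with $C_1$ being a union of connected components of $G_1$.
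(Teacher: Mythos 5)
Your computation for the ovoidal Minkowski plane is correct and is a genuinely different route from the paper's. The paper never touches the Bose--Mesner algebra here: it counts directly in $\pg(3,q)$ the planes meeting both $P^\perp$ and $R^\perp$ in tangent lines to $\mq^\eps$, working along the line $l=(PR)^\perp$ and using the external/internal dichotomy from \S\ref{SectionAssoc}; that single geometric count covers the Möbius and Minkowski cases simultaneously via $\eps=\pm1$. Your decomposition $A_1^2=a_0A_0+a_1A_1+a_2A_2+a_4A_4$ with $a_3=a_5=0$ is legitimate (adjacency in $G_1$ forces equal $\kappa$-type, so a length-two walk stays in one $\kappa$-class), and the values $a_0=q^2-1$, $a_1=a_2=2(q-1)$, $a_4=2(q+1)$ do satisfy the four distinct eigenvalue equations extracted from the matrix $P$ of \S\ref{SectionAssoc}. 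So the ovoidal Minkowski half is fine, and it buys a derivation requiring no new geometry once the subscheme of the conjugacy class scheme of $\pgl(2,q)$ is in hand.

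The gaps are elsewhere, and they are in what you correctly identify as the hard part. For the ovoidal Möbius plane, your primary route presupposes that the restricted relations on $C_1$ form an association scheme whose full eigenvalue matrix is available; the paper establishes the six-relation scheme only for $\eps=+1$ (it states that for the Möbius case the intersection numbers are not straightforward to find, and the existence of that scheme is raised again as an open point in the concluding remarks), and \cite{bannaihaosong} is cited only for the eigenvalues of $A_1$ restricted to $C_1$, not for an eigenvalue matrix of a scheme on $C_1$. So ``the same linear-algebra argument'' cannot be run as stated. Your fallback --- counting points $S\notin\mq^-$ with $SP$ and $SR$ both tangent --- is the correct dual of what the paper actually does, but it remains a sketch: the decisive step, that $\kappa(P)=\kappa(R)$ makes a point of $(PR)^\perp$ external to $P^\perp\cap\mq^-$ if and only if it is external to $R^\perp\cap\mq^-$, so that each of the $\frac{q+1-i}{2}$ external points of $(PR)^\perp$ contributes exactly $2\times 2$ common neighbours, is exactly where $2(q-1)$ versus $2(q+1)$ comes from and is not carried out. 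Separately, the lemma also covers the non-ovoidal $\cm(2,q,\varphi)$, which your association-scheme computation does not reach: one needs that within a coset of $\psl(2,q)$ the intersection size of the graphs of $f_M$ and $f_N$ equals the number of fixed points of $MN^{-1}$, hence is independent of $\varphi$, and then the isomorphism of Lemma \ref{LmG1MinkowskiIsomorphic}; this reduction is absent from your proposal.
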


\begin{proof}
First consider an ovoidal Möbius or Minkowski plane.
Let $P^\perp$ and $R^\perp$ be oval planes to $\mq^\eps$ with $\kappa(P)=\kappa(R)$.
Denote $l = (PR)^\perp$.
We count the number of planes intersecting $P^\perp$ and $R^\perp$ in a tangent line to $\mq^\eps$.
Since a tangent plane $Q^\perp$ contains $1+\eps \in \set{0,2}$ $(q+1)$-secants to $\mq^\eps$, $Q$ is the only point in $Q^\perp$ on more than one 1-secant.
Thus, the only tangent plane that we will count is $Q^\perp$ when $l \cap \mq^\eps = \set Q$.

First suppose that $|l \cap \mq^\eps| = 1$.
Then every point of $l\setminus \mq^\eps$ lies on a unique tangent line distinct from $l$ in $P^\perp$ and $R^\perp$.
Also, there are $q-2$ oval planes through $l$ distinct from $P^\perp$ and $R^\perp$.
This yields a total of $2(q-1)$.

Now suppose that $|l \cap \mq^\eps| = i \in \set{0,2}$.
There are $\frac{q+1-i}2$ points on $l$ which are external in $P^\perp$, and therefore also external in $R^\perp$.
Each of these points gives 2 tangent lines in both planes, so 4 planes intersecting both $P^\perp$ and $R^\perp$ in a 1-secant.

Secondly, consider a $\cm(2,q,\varphi)$.
Since, we are working in a connected component of $G_1$, the vertices correspond to graphs of $f_M$ with $M$ the elements of a coset of $\psl(2,q)$.
Thus, the size of the intersection of the circles corresponding to $f_M$ and $f_N$ equals the number of fixed points of $M N^{-1}$.
Hence, it is independent of the choice of $\varphi$.
Now apply the isomorphism to the $G_1$ graph of $\cm(2,q,\text{id})$.
\end{proof}

\section{A point lies on either few or many circles of a large intersecting family}
 \label{SectionFewOrMany}
 
Throughout this section, let $D = (\mp,\mb)$ be a $\cm(\rho,q)$ with $q>2$.
We will prove that if $\mf$ is a large intersecting family in this circle geometry, any point lies on either few or many circles of $\mf$.
We will do this using the following graph.

\begin{df}
 \label{DfGp}
Take a point $P \in \mp$.
Define the sets $L = \sett{c \in \mb}{P \in c}$, and $R = \mb \setminus L$.
We define a graph $G_P$ with bipartition $(L,R)$ where $c_l \in L$ and $c_r \in R$ are adjacent if and only if $c_l \cap c_r = \emptyset$.
\end{df}

\begin{lm}
 \label{LmDelta}
Let $G_P$ be as in Definition \ref{DfGp}, defined on a $\cm(\rho,q)$.
\begin{enumerate}[(1)]
 \item Every vertex in $L$ has degree $\displaystyle \delta = (q+\rho-2) \binom q 2$.
 \item Every vertex in $R$ has degree $\displaystyle \frac{(q+\rho-2)(q+1-\rho)}2$.
\end{enumerate}
\end{lm}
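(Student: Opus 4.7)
My plan is to compute both degrees by a uniform double-counting argument: I would fix a circle on one side of the bipartition and enumerate the circles on the other side stratified by their intersection size with the fixed circle. The only parameters I would need are the standard ones of a $\cm(\rho,q)$ quoted in the preliminaries: there are $q^3 + (1-\rho)q$ circles in all, each point lies on $q^2+(1-\rho)q$ of them, and each pair of non-parallel points lies on $q+1-\rho$ of them. I would also repeatedly use that two points on the same circle are never parallel (each parallel class meets a circle in exactly one point) and that two distinct circles share at most $2$ points, since three common points would be pairwise non-parallel and so force equality by the first axiom.

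For part (1) I would fix $c_l \in L$ and write $m_i$ for the number of circles meeting $c_l$ in exactly $i$ points. The remarks above give $m_i=0$ for $3 \le i \le q$ and $m_{q+1}=1$, and since $P \in c_l$ the degree of $c_l$ in $G_P$ is exactly $m_0$. Counting incidences $(c,Q)$ with $Q\in c_l\cap c$ through each of the $q+1$ points of $c_l$, and $(c,\{Q_1,Q_2\})$ with $Q_1,Q_2\in c_l\cap c$ through each of the $\binom{q+1}{2}$ (necessarily non-parallel) pairs on $c_l$, produces two linear equations in $m_1,m_2$. Solving them and subtracting $m_1+m_2+m_{q+1}$ from the total circle count would yield $m_0 = (q+\rho-2)\binom{q}{2}$.

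For part (2) I would fix $c_r \in R$ and let $n_i$ be the number of circles through $P$ meeting $c_r$ in exactly $i$ points, so that $\sum_i n_i = q^2+(1-\rho)q$ and the required degree is $n_0$. A point of $c_r$ parallel to $P$ lies on no circle through $P$, while a non-parallel one lies on exactly $q+1-\rho$ such circles, and each non-parallel pair on $c_r$ together with $P$ determines (by the first axiom) a unique circle through $P$ meeting $c_r$ in two points. Writing $k$ for the number of points of $c_r$ parallel to $P$, these observations give $n_1 + 2n_2 = (q+1-k)(q+1-\rho)$ and $n_2 = \binom{q+1-k}{2}$.

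The step I expect to require most care, and the main obstacle in the argument, is the identification $k = \rho$. For $\rho=0$ the statement is vacuous; for $\rho=1$ the parallel class of $P$ meets $c_r$ in a unique point, which must differ from $P$ since $P \notin c_r$; for $\rho=2$ each of the two parallel classes through $P$ contributes one point to $c_r$, and these two points are distinct because two parallel classes from different relations intersect in the single point $P$ by the fourth axiom. Once $k=\rho$ is established, routine algebra collapses the system to $n_0 = \frac{(q+\rho-2)(q+1-\rho)}{2}$, matching the claim.
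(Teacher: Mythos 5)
Your proposal is correct and takes essentially the same approach as the paper: fix a circle on one side, stratify the circles on the other side by intersection size, and solve for the number of disjoint ones using the standard parameters of a $\cm(\rho,q)$. The only cosmetic difference is that you recover the tangent counts $m_1$ and $n_1$ by double counting, where the paper invokes its earlier regularity lemma for the $1$-intersecting graph and the tangency axiom; your careful identification $k=\rho$ is exactly the count of points of $c_r$ non-parallel to $P$ that the paper uses implicitly.
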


\begin{proof}
(1) Take a circle $c \in L$.
Let $n_i$ denote the number of circles intersecting $c$ in exactly $i$ points.
Since all circles in $L$ contain $P$, $\delta = n_0$.
By Lemma \ref{LmDegreeG1}, $n_1 = q^2-1$.
Through 2 non-parallel points, there are $q+1-\rho$ circles, so $n_2 = \binom{q+1}2 (q-\rho)$.
Obviously, $n_{q+1}=1$ and $\sum_i n_i = b = q^3 + (1-\rho)q$, so
\[
 \delta = n_0 = q^3 + (1-\rho)q - (q^2-1) - \binom{q+1}2(q-\rho) - 1 = (q+\rho-2) \binom q 2.
\]
(2) Take a circle $c \in R$.
Let $n_i$ denote the number of circles through $P$ intersecting $c$ in exactly $i$ points.
Every point $Q$ of $c$ not parallel to $P$ determines a unique tangent circle to $c$ through $P$, so $n_1 = q+1-\rho$.
There are $q-\rho$ other circles through $Q$ and $P$, each intersecting $c$ in 2 points, hence $n_2= \frac 1 2 (q+1-\rho)(q-\rho)$.
The total number of circles through $P$ equals $q(q+1-\rho)$.
Therefore,
\[
 n_0 = q(q+1-\rho) - \frac 1 2 (q+1-\rho)(q-\rho) - (q+1-\rho)
 = \frac{(q+\rho-2)(q+1-\rho)}2.
 \qedhere
\]
\end{proof}

We want to determine the second largest eigenvalue of $G_P$.
The following lemma is well-known, but we include it for completeness sake.

\begin{lm}
 \label{LmLambda2GP}
 Let $L$ and $G_P$ be as in Definition \ref{DfGp}.
 Let $N$ denote the square matrix labelled by $L$, whose $(c_1,c_2)$ entry equals the number of common neighbours of $c_1$ and $c_2$ in $G_P$.
 Then $\lambda_2(G_P) = \sqrt{\lambda_2(N)}$.
\end{lm}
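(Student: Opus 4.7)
The plan is to apply the standard block-matrix identity that holds for any bipartite graph. Order the vertices of $G_P$ so that those in $L$ come first, and write
\[
 A(G_P) = \begin{pmatrix} 0 & B \\ B^T & 0 \end{pmatrix},
\]
where $B$ is the $|L|\times|R|$ biadjacency matrix defined by $B_{c_l,c_r}=1$ iff $c_l c_r$ is an edge of $G_P$. Squaring gives
\[
 A(G_P)^2 = \begin{pmatrix} B B^T & 0 \\ 0 & B^T B \end{pmatrix}.
\]

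Next I would identify $N$ with $BB^T$. The entry $(BB^T)_{c_1,c_2}$ equals $\sum_{c_r \in R} B_{c_1,c_r}B_{c_2,c_r}$, which counts the vertices in $R$ adjacent to both $c_1$ and $c_2$. Because $G_P$ is bipartite, every common neighbour of two vertices of $L$ lies in $R$, so this entry is precisely $N_{c_1,c_2}$. Hence $N=BB^T$.

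Finally, I would read off the spectrum. Let $\sigma_1\geq\sigma_2\geq\cdots\geq 0$ be the singular values of $B$; then the eigenvalues of $N=BB^T$ are $\lambda_i(N)=\sigma_i^2$. Since $A(G_P)$ is symmetric and block anti-diagonal, a short computation (using the map $(u,v)\mapsto(u,\pm v)$ on eigenvectors arising from the singular value decomposition of $B$) shows that its spectrum consists of $\pm\sigma_i$, padded with zeros if $|L|\neq|R|$, and is therefore symmetric about $0$. Sorting non-increasingly, the top half of the spectrum is $\sigma_1\geq\sigma_2\geq\cdots\geq 0$, so in particular $\lambda_2(G_P)=\sigma_2=\sqrt{\lambda_2(N)}$.

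This is a textbook manipulation rather than a substantial proof; there is no serious obstacle. The only subtle point worth checking is that $\lambda_2(G_P)$ really matches $\sigma_2$ rather than $-\sigma_1$, but this is immediate from the fact that the spectrum of any bipartite graph is symmetric about $0$, so its second-largest eigenvalue equals its second-largest singular value.
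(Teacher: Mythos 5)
Your proof is correct and follows essentially the same route as the paper: write $A(G_P)$ in block anti-diagonal form, square it to identify $N$ with $BB^T$, and use the symmetry of the bipartite spectrum (equivalently, the singular values of $B$) to conclude $\lambda_2(G_P)=\sqrt{\lambda_2(N)}$. The explicit appeal to the singular value decomposition is a cosmetic difference only.
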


\begin{proof}
Since $G_P$ is bipartite, $A(G_P)$ is of the form $\begin{pmatrix} 0 & M \\ M^t & 0 \end{pmatrix}$ for some matrix $M$ whose rows and columns are labelled by $L$ and $R$ respectively.
Then $A(G_P)^2 = \begin{pmatrix} M M^t & 0 \\ 0 & M^t M\end{pmatrix}$.
The matrices $M M^t$ and $M M^t$ have the same non-zero eigenvalues with the same multiplicity.
Furthermore, the spectrum of $A(G_P)$ is symmetric around 0.
These two properties imply that any non-zero eigenvalue $\lambda$ of $M M^t$ gives non-zero eigenvalues $\pm \sqrt{\lambda}$ of $A(G_P)$ with the same multiplicity.
Therefore, $\lambda_2(G_P) = \sqrt{\lambda_2(M M^t)}$.
To end the proof, one just has to note that $M M^t$ is labelled by the vertices of $L$, and gives the number of common neighbours in $G_P$.
\end{proof}

From now on, let $N$ be as defined in the previous lemma.
We need to compute its spectrum.

\begin{lm}
If $D$ is an extended Laguerre plane of even order, then $\lambda_2(G_P) = \frac 1 2 q \sqrt{q-1}$.
\end{lm}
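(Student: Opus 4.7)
The plan is to apply Lemma \ref{LmLambda2GP} and reduce the problem to computing $\lambda_2(N)$, where $N$ is the $L \times L$ matrix whose $(c_1,c_2)$ entry counts the common neighbours of $c_1$ and $c_2$ in $G_P$ (equivalently, the circles $c'$ disjoint from both $c_1$ and $c_2$). The key structural observation is that in an extended Laguerre plane of even order, distinct circles meet in either $0$ or $2$ points. Hence any two distinct circles $c_1, c_2 \in L$, which both contain $P$, must satisfy $|c_1 \cap c_2| = 2$, so the pair $(c_1, c_2)$ lies in relation $R_2$ of the 2-class association scheme recalled in \S\ref{SectionAssoc}. Moreover, a circle disjoint from $c_1$ automatically lies in $R$, since any circle in $L$ shares the point $P$ with $c_1$.

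Consequently, $N_{c_1,c_2} = p_{33}^2$ for every pair of distinct $c_1, c_2 \in L$, and $N_{c,c} = p_{33}^0 = v_3 = \tfrac{q(q-1)^2}{2}$, which also matches $\delta$ from Lemma \ref{LmDelta} with $\rho=1$. Setting $\alpha = p_{33}^2$, we obtain $N = (\delta - \alpha)I_L + \alpha J_L$, whose eigenvalues are the simple eigenvalue $\delta + (|L|-1)\alpha$ and $\delta - \alpha$ with multiplicity $|L|-1$. Since $\alpha > 0$ for $q > 2$, this gives $\lambda_2(N) = \delta - \alpha$.

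It remains to compute $\alpha = p_{33}^2$. Using the relation
\[
 A_3^2 = v_3 A_0 + p_{33}^2 A_2 + p_{33}^3 A_3
\]
and expanding both sides in the idempotent basis $E_0, E_1, E_2$ with eigenvalues read off from the matrix $P$ in \S\ref{SectionAssoc}, one obtains a linear system in the unknowns $p_{33}^2, p_{33}^3$ (by equating coefficients on $E_1$ and $E_2$). Solving this system yields $p_{33}^2 = \tfrac{q(q-1)(q-2)}{4}$. Therefore
\[
 \lambda_2(N) = \delta - \alpha = \tfrac{q(q-1)^2}{2} - \tfrac{q(q-1)(q-2)}{4} = \tfrac{q^2(q-1)}{4},
\]
and $\lambda_2(G_P) = \sqrt{\lambda_2(N)} = \tfrac{1}{2} q \sqrt{q-1}$, as claimed.

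The main obstacle is the computation of $p_{33}^2$; this is a routine but slightly tedious linear algebra calculation with the eigenvalue data of the association scheme. Everything else is an immediate consequence of the fact that distinct circles in an extended Laguerre plane of even order never share exactly one point, which forces the matrix $N$ to have the simple combinatorial structure $(\delta-\alpha) I + \alpha J$.
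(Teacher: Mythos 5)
Your proof is correct, and the reduction is the same as the paper's: both apply Lemma \ref{LmLambda2GP}, observe that in the extended Laguerre plane any two distinct circles through $P$ meet in exactly two points and that any circle disjoint from a circle of $L$ automatically lies in $R$, conclude that $N = \delta I + \alpha(J-I)$ for a single constant $\alpha$, and read off $\lambda_2(N) = \delta - \alpha$. The difference lies entirely in how $\alpha$ is computed. The paper counts geometrically in $\pg(3,q)$: it takes the line $l = \pi_1 \cap \pi_2$ of the two oval planes, notes there are $q-1$ points of $l$ off the two circles, and through each such point there are $\frac q2$ lines of $\pi_1$ skew to $c_1$ and $\frac q2$ lines of $\pi_2$ skew to $c_2$, of which exactly one pairing spans a tangent plane through the vertex of the hyperoval cone; this gives $\alpha = (q-1)\frac q2\left(\frac q2 - 1\right) = \frac{q(q-1)(q-2)}{4}$ directly. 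You instead extract $\alpha = p_{33}^2$ algebraically from the Bose--Mesner relation $A_3^2 = v_3 A_0 + p_{33}^2 A_2 + p_{33}^3 A_3$ and the eigenvalue matrix of the $2$-class scheme; I checked that the resulting $2\times 2$ system (from the $E_1$- and $E_2$-coefficients, with determinant $\frac{q^2}{2} \neq 0$) indeed yields $p_{33}^2 = \frac{q(q-1)(q-2)}{4}$, consistent with the paper. Your route is more systematic and avoids the slightly delicate geometric point about the excluded line $\vspan{l_1,U}\cap\pi_2$, but it leans on the correctness of the eigenvalue matrix quoted in \S\ref{SectionAssoc}, whereas the paper's count is self-contained at this step. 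Both give $\lambda_2(N) = \frac{q^2(q-1)}{4}$ and hence the claimed value of $\lambda_2(G_P)$.
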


\begin{proof}
Let $\mq_+$ denote the hyperoval cone used to construct $D$, and denote its vertex by $U$.
Suppose that $c_1$ and $c_2$ are distinct circles through $P$.
Then they intersect is a second point $P_2$.
Let $\pi_1$ and $\pi_2$ be the planes in $\pg(3,q)$ spanned by $c_1$ and $c_2$ respectively.
Denote the intersection line of these planes as $l$.
There are $q-1$ points $Q$ on $l \setminus \set{P,P_2}$.
Through $Q$ there are $\frac q 2$ skew lines $l_1$ to $c_1$ in $\pi_1$.
There are equally many skew lines $l_2$ to $c_2$ in $\pi_2$ through $Q$.
However, if $l_2 = \vspan{l_1,U}\cap\pi_2$, then $\vspan{l_1,l_2}$ is not an (hyper)oval plane.
Thus, there are $(q-1) \frac q 2 \left( \frac q 2 - 1 \right)$ circles disjoint to $c_1$ and $c_2$.

Therefore,
\[
 N = \delta I + \frac{q-2}4 (q-1) q (J - I)
\]
Its eigenspaces are $\vspan \one$ and $\vspan \one ^\perp$.
It follows that the second largest eigenvalue of $N$ equals $\delta - \frac{q-2}4 (q-1) q = \frac{q-1}4 q^2$.
\end{proof}

\begin{lm}
 Let $D$ be a Möbius or Minkowski plane of even order, or an ovoidal Laguerre plane of odd order.
 Then $\lambda_2(G_P) = \frac 1 2 \sqrt{q(q+(2-\rho))(q-(2-\rho))}$.
\end{lm}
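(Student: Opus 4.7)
The plan is to compute $\lambda_2(N)$ and then apply Lemma~\ref{LmLambda2GP}. Observe first that for $c_l \in L$ and $c \in \mb$ with $c \cap c_l = \emptyset$, we must have $P \notin c$, hence $c \in R$. Consequently, for $c_1, c_2 \in L$ the entry $N_{c_1 c_2}$ equals the total number of circles of $\mb$ disjoint from both $c_1$ and $c_2$, i.e., the intersection number $p_{33}^k$ of the association scheme from Section~\ref{SectionAssoc}, where $k$ is the relation between $c_1$ and $c_2$. Since $c_1, c_2 \in L$ always share $P$, only the relations $R_0, R_1, R_2$ are possible. Writing $a_1 = p_{33}^1$ and $a_2 = p_{33}^2$ we have
\[
N = \delta I + a_1 (A_1)_{LL} + a_2 (A_2)_{LL}.
\]

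I next identify the structure of $(A_1)_{LL}$. Two distinct circles through $P$ tangent at $P$ correspond, after removing $P$, to disjoint lines in the affine residue at $P$, i.e., to parallel lines. Since parallelism is an equivalence relation, $(A_1)_{LL}$ is the adjacency matrix of a disjoint union of $k := q+1-\rho$ cliques $K_q$, one per parallel class of the residue not occupied by parallel-class lines. Together with $(A_2)_{LL} = J - I - (A_1)_{LL}$, this yields the decomposition $\mathbb{R}^L = \langle \one_L \rangle \oplus W_1 \oplus W_2$, where $W_1$ is the $(k-1)$-dimensional space of component-sum vectors orthogonal to $\one_L$ and $W_2$ is the $k(q-1)$-dimensional within-component space. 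The three eigenvalues of $N$ are
\[
\mu_0 = \delta \cdot \tfrac{(q+\rho-2)(q+1-\rho)}{2}, \qquad \mu_1 = \delta + (q-1) a_1 - q a_2, \qquad \mu_2 = \delta - a_1,
\]
where $\mu_0$ follows from the biregularity of $G_P$ in Lemma~\ref{LmDelta}, since $N\one_L = M(M^t\one_L)$ can be computed from the two degrees.

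It remains to compute $a_1$ and $a_2$. This can be done using the matrix of eigenvalues of the relevant association scheme in Section~\ref{SectionAssoc} via the inversion formula $p_{33}^k = \frac{1}{n k_k} \sum_l m_l P_{l3}^2 P_{lk}$, with the multiplicities $m_l = n / \sum_i P_{li}^2 / k_i$. Carrying out this calculation gives $a_1 = q(q-2)(q-4)/4$ and $a_2 = (q-1)(q-2)^2/4$ for the Möbius plane; $a_1 = q(q-1)(q-3)/4$ and $a_2 = (q-1)^3/4$ for the Laguerre plane; and $a_1 = q^2(q-2)/4$ and $a_2 = q^2(q-1)/4$ for the Minkowski plane. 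In each of the three cases the substitution gives $\mu_1 = 0$ and $\mu_2 = \tfrac{1}{4} q(q+2-\rho)(q-2+\rho)$, so $\lambda_2(N) = \mu_2$, and Lemma~\ref{LmLambda2GP} then gives the claimed value of $\lambda_2(G_P)$.

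The main obstacle is the algebraic bookkeeping: computing the multiplicities $m_l$ from the $4 \times 4$ matrices of Section~\ref{SectionAssoc} and then applying the inversion formula produces four-term sums that must be telescoped separately in each of the three cases. A more geometric alternative would be to count $a_1$ and $a_2$ directly in the projective representation as in the previous lemma --- using central projection from the vertex $U$ of the cone for the Laguerre case, and the polarity on the ambient quadric together with Lemma~\ref{LmLPerp} for the Möbius and Minkowski cases --- but this still requires case splits distinguishing external, internal, and nucleus points with respect to the relevant conics.
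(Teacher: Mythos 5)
Your proposal is correct and follows essentially the same route as the paper: it identifies $N_{c_1c_2}$ with the intersection numbers $p^1_{33}$, $p^2_{33}$ of the $3$-class scheme, exploits the block structure of $(A_1)_{LL}$ coming from the parallel classes of the affine residue at $P$ to diagonalise $N$ into the three eigenvalues $\mu_0=\delta d_R$, $\mu_1=0$, $\mu_2=\delta-p^1_{33}$, and then invokes Lemma~\ref{LmLambda2GP}. The only cosmetic difference is that the paper simply quotes $p^1_{33}=\frac q4(q-2+\rho)(q-4+\rho)$ and $p^2_{33}=\frac{q-1}4(q-2+\rho)^2$ from \cite{adriaensen2021} rather than rederiving them from the eigenvalue matrix, and your stated values agree with these in all three cases.
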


\begin{proof}
Consider the 3-class association scheme as described in \S \ref{SectionAssoc} linked to the circle geometries mentioned in the lemma.
Denote the intersection numbers of these association schemes as $p_{ij}^k$.
These numbers can be found in \cite{adriaensen2021}.
The relevant ones are
\begin{align*}
 p^1_{33} = \frac q 4 (q-2+\rho)(q-4+\rho), &&
 p^2_{33} = \frac {q-1} 4 (q-2+\rho)^2.
\end{align*}
To construct $N$, assume that the circles of $L$ are ordered in $q+1-\rho$ blocks of size $q$, each representing a parallel class in the affine residue at $P$.
Then
\[
 N = \delta I + p^1_{33} I_{q+1-\rho} \otimes (J-I)_q + p^2_{33} (J-I)_{q+1-\rho} \otimes J_q.
\]
The eigenspaces of $N$ are spanned by the following vectors.
Here $y_d$ denotes any vector in $\mathbb R^d$, and $x_d$ denotes a vector in $\vspan{\one_d}^\perp$.
\begin{enumerate}
    \item The all-one vector with eigenvalue $\delta + p^1_{33}(q-1) + p^2_{33}(q-\rho)q$,
    \item vectors $y_{q+1-\rho} \otimes x_q$ with eigenvalue $\delta - p^1_{33}$,
    \item vectors $x_{q+1-\rho}\otimes\one_q$ with eigenvalue $\delta + p^1_{33}(q-1) - p^2_{33} q = 0$.
\end{enumerate}
Therefore, $\lambda_2(N) = \delta - p^1_{33}$.
Now apply Lemma \ref{LmLambda2GP}.
\end{proof}

Lastly, we deal with the Möbius and Minkowski planes of odd order.
We can deduce the entries of $N$ from the structure of the $G_1$ graph.
This is a bit convoluted for the ovoidal circle geometries, but allows us to use a unified proof strategy, which also works for the $\cm(2,q,\varphi)$ geometries.

\begin{lm}
 \label{LmBoundLambda}
Let $L$ and $R$ be as in Definition \ref{DfGp}.
Take two distinct circles $c_1$ and $c_2$ in $L$.
Let $n_{11}$ denote the number of circles in $R$ that intersect $c_1$ and $c_2$ in exactly one point, and let $s$ denote $|c_1 \cap c_2|$.
Then the number of common neighbours in $G_P$ of $c_1$ and $c_2$ equals
\[
 \frac{q-s+1}4 (q-3+\rho+s)(q-5+\rho+s) + \frac{n_{11}}4. 
\]
\end{lm}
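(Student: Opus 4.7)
The plan is to classify the circles $c \in R$ by the pair $(|c \cap c_1|, |c \cap c_2|)$ and to solve a small linear system. Set
\[
n_{ij} = \bigl|\{c \in R : |c \cap c_1| = i,\ |c \cap c_2| = j\}\bigr| \qquad (i, j \in \{0, 1, 2\}),
\]
so that the quantity of interest is $n_{00}$, and $n_{11}$ is exactly the $n_{11}$ in the statement. Because $c_1$ and $c_2$ play symmetric roles, $n_{ij} = n_{ji}$.

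First I would compute the marginals. By Lemma~\ref{LmDelta}(1), $\sum_j n_{0j} = (q + \rho - 2)\binom{q}{2}$, while a direct count of the tangent and $2$-secant circles to $c_1$ avoiding $P$ yields $\sum_j n_{1j} = q(q-1)$ and $\sum_j n_{2j} = (q - \rho)\binom{q}{2}$; the column sums agree by symmetry. Eliminating the $n_{0j}$ and $n_{i0}$ via these identities produces the master relation
\[
n_{00} = (\rho - 2)\,q(q-1) + n_{11} + 2 n_{12} + n_{22}.
\]

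Next I would carry out two further double counts. Summing $|c \cap c_2|$ over circles $c \in R$ tangent to $c_1$, and reversing the order of summation with separate treatment of tangent points lying in $c_1 \cap c_2 \setminus \{P\}$ (where a short argument in the affine residue at the tangent point forces $|c \cap c_2| = 2$) and tangent points in $c_1 \setminus c_2$ (where one directly enumerates the admissible second incidence $Q_2 \in c_2$ via the same residue), yields $n_{11} + 2 n_{12} = A$ for an explicit polynomial $A$ in $q, \rho, s$. A second count of $\sum_{c \in R} |c \cap c_1| \cdot |c \cap c_2|$, inverting the order of summation and classifying pairs $(Q_1, Q_2) \in c_1 \times c_2$ according to whether $Q_1 = P$, $Q_2 = P$, $Q_1 = Q_2$, or $Q_1$ is parallel to $Q_2$, and applying the axiom that three pairwise non-parallel points lie on a unique circle, gives $n_{11} + 4 n_{12} + 4 n_{22} = B$ for an explicit polynomial $B$.

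Combining these two identities gives $n_{22} = (B - 2A + n_{11})/4$, which upon substitution into the master relation produces
\[
n_{00} = (\rho - 2)\,q(q-1) + \frac{2A + B + n_{11}}{4}.
\]
Expanding $A$ and $B$ as polynomials in $q, \rho, s$ and simplifying reduces the right-hand side to the claimed formula. The main technical obstacle is the careful bookkeeping of parallel pairs when $\rho = 2$: in the Minkowski case, the two parallel classes of a fixed $Q_1 \in c_1$ cut $c_2$ in two points whose possible coincidences (with $P$, with $Q_1$, or with the analogous points coming from $Q_2 \in c_2$) depend on whether $Q_1 \in c_2$, so the constants $A$ and $B$ must be assembled by case analysis on $\rho$ and $s$ before the final algebraic identity collapses to the stated form.
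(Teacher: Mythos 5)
Your skeleton coincides with the paper's proof. The paper also classifies circles of $R$ by the pair $(|c\cap c_1|,|c\cap c_2|)$, and works with $\delta=\sum_j n_{0j}$, the total $|R|=q^2(q-1)$, and the three quantities $|T_1|=n_{11}+2n_{12}$, $|T_2|=n_{11}+2n_{21}$, $|N|=\sum_{i,j}ijn_{ij}$, arriving at
\[
n_{00}=2\delta+\tfrac14\bigl(|N|+|T_1|+|T_2|+n_{11}\bigr)-q^2(q-1),
\]
which is exactly your identity $n_{00}=(\rho-2)q(q-1)+\tfrac14(2A+B+n_{11})$ once one sets $A=|T_1|=|T_2|$, $B=|N|$ and uses $2\delta-q^2(q-1)=(\rho-2)q(q-1)$. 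Your marginals $\sum_j n_{1j}=q(q-1)$ and $\sum_j n_{2j}=(q-\rho)\binom q2$ are correct, so the reduction to the two counts $A$ and $B$ is sound and equivalent to the paper's.

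The gap is that you never actually evaluate $A$ and $B$; that evaluation is the substance of the lemma and occupies most of the paper's proof. One needs
\[
A=(q+1-s)(q-1+s-\rho),\qquad B=(q+1-s)(q-1+s-\rho)^2,
\]
obtained by the case analysis you gesture at (separate treatment of $s=1$ and $s=2$, and within $s=2$ of the subcases $Q_1=Q_2=P_2$, exactly one $Q_a$ equal to $P_2$, and both generic). Two points there are easy to get wrong and should be made explicit: first, when $Q_a=P_2\in c_1\cap c_2$, \emph{no} point of $c_2\setminus\set{P_2}$ is parallel to $P_2$ (a circle meets each parallel class exactly once), so all $q-1$ points of $c_2\setminus\set{P,P_2}$ are admissible partners, whereas a generic $Q_1\in c_1\setminus c_2$ excludes $\rho$ points of $c_2$; second, in $B$ the diagonal pair $Q_1=Q_2=P_2$ contributes $q^2+(1-\rho)q-(q+1-\rho)$ circles, in contrast to the $q-\rho$ contributed by a generic non-parallel pair. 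With the displayed values one gets $2A+B=(q+1-s)(q-1+s-\rho)(q+1+s-\rho)$, and the polynomial identity
\[
(\rho-2)q(q-1)+\tfrac14(q+1-s)(q-1+s-\rho)(q+1+s-\rho)=\tfrac{q-s+1}4(q-3+\rho+s)(q-5+\rho+s)
\]
does hold, so your plan closes; but as written the critical computations are asserted rather than performed, and the final ``expanding and simplifying'' cannot be checked without them.
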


\begin{proof}
Let $n_{ij}$ denote the number of circles $c \in R$ with $|c_1 \cap c| = i$ and $|c_2 \cap c| = j$, where $i,j \in \set{0,1,2}$.
We want to calculate the number $n_{00}$.
Note that $\sum_{i,j=0}^2 n_{ij} = |R| = q^2(q-1)$, and that $\sum_{i=0}^2 n_{i0} = \sum_{j=0}^2 n_{0j} = \delta$.
Now define the following sets.
\begin{align*}
 N &= \sett{(Q_1,Q_2,c) \in c_1 \times c_2 \times R}{Q_1, Q_2 \in c}, \\
 T_a &= \sett{(Q_1,Q_2,c) \in N}{c_a \cap c = \set{Q_a}} \text{ for } a=1,2.
\end{align*}
Then $|N| = \sum_{i,j} i j n_{ij}$, $|T_1| = n_{11} + 2 n_{12}$, $|T_2| = n_{11} + 2 n_{21}$.
These equations imply that
\[
 4 \sum_{i,j=1}^2 n_{ij} = |N| + |T_1| + |T_2| + n_{11}.
\]
Taking the sum over all $n_{ij}$ where $i$ or $j$ is zero, equals $2 \delta - n_{00}$.
Therefore,
\begin{align*}
 & 2 \delta - n_{00} + \frac 1 4 (|N| + |T_1| + |T_2| + n_{11}) = \sum_{i,j=0}^2 n_{ij} = q^2(q-1) \\
 \Longrightarrow \; & n_{00} = 2(q+\rho-2)\binom q 2 + \frac 1 4 (|N| + |T_1| + |T_2| + n_{11}) - q^2(q-1).
\end{align*}
To finish the proof, we will calculate $|N|$, $|T_1|$, and $|T_2|$ depending on the size of $c_1 \cap c_2$.

\bigskip

\underline{Case 1: $c_1 \cap c_2 = \set P$.}

\begin{enumerate}
    \item $|N| = q(q-\rho)^2$,
    \item $|T_a| = q(q-\rho)$ for $a = 1,2$.
\end{enumerate}
This can be seen as follows: there are $q$ choices for $Q_a \in c_a \setminus \set P$, and $q-\rho$ choices for a point $Q_{3-a} \in c \setminus \set P$, not parallel with $Q_a$.
There are $q-\rho$ circles $c$ through $Q_1$ and $Q_2$, not containing $P$, which yields $|N|$.
There is a unique circle through $Q_{3-a}$ which intersects $c_a$ exactly in $Q_a$, which yields $|T_a|$.

\bigskip

\underline{Case 2: $c_1 \cap c_2 = \set{P,P_2}$.}

\begin{enumerate}
    \item $|N| = (q-1)(q+1-\rho)^2$: We can choose $q-1$ points $Q_1$ on $c_1 \setminus \set{P,P_2}$, $q-1-\rho$ points $Q_2$ on $c_2 \setminus \set{P,P_2}$ not parallel to $Q_2$, and $q-\rho$ circles through $Q_1, Q_2$ and not through $P$.
    If we choose exactly one of the $Q_a$'s equal to $P_2$, then we can choose $Q_{3-a}$ to be any point of $c_{3-a} \setminus \set{P,P_2}$, which also give us $q-\rho$ circles missing $P$.
    We can also choose $Q_1 = Q_2 = P_2$, in which case there are $r$ circles through $Q_1, Q_2$, of which $q+1-\rho$ contain $P$.
    Hence, $|N| = (q-1)(q-1-\rho)(q-\rho) + 2(q-1)(q-\rho) + (q^2 + (1-\rho)q - (q+1-\rho))$.
    \item $|T_a| = (q-1)(q+1-\rho)$ for $a = 1,2$: There are $q-1$ ways to choose $Q_a \in c_a \setminus \set{P,P_2}$, $q-\rho-1$ ways to choose a non-parallel point $Q_{3-a} \in c_{3-a} \setminus \set{P,P_2}$, and one circle through $Q_{3-a}$, which intersects $c_a$ exactly in $Q_a$.
    If we choose $Q_a = P_2$, then we can either choose $Q_{3-a} \neq P_2$, which leaves $q-1$ choices for $Q_{3-a}$, each on a unique circle touching $c_a$ in $Q_a$.
    Or we can choose $Q_1 = Q_2 = P_2$, then there are $q-1$ circles touching $c_a$ in $P_2$ (look in the affine residue at $P_2$), none of which can contain $P$.
    Thus, $|T_a| = (q-1)(q-\rho-1) + (q-1) + (q-1)$. \qedhere
\end{enumerate}
\end{proof}

\begin{df}
Let $q$ be an odd prime power.
First consider an ovoidal Möbius plane of order $q$, with $\kappa$ and $\perp$ as before.
Call a circle \emph{of square type} if its corresponding oval plane is $P^\perp$ with $\kappa(P) = S_q$.
Now consider a $\cm(2,q,\varphi)$.
Call a circle \emph{of square type} if it is the graph of $f_A$ with $A \in \psl(2,q)$.
\end{df}

Note that in the geometric description of an ovoidal Minkowski plane, circles of square type also correspond to the oval plane sections with planes $P^\perp$ for which $\kappa(P) = S_q$.

\begin{lm}
\label{LmEqManyOvalPlanes}
 Half of the circles through any point of an ovoidal Möbius plane, or a $\cm(2,q,\varphi)$, of odd order are of square type.
\end{lm}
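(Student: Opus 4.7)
The plan is to split on the two types of circle geometries described in the statement, since the algebraic description (for $\cm(2,q,\varphi)$) and the quadric description (for the ovoidal Möbius plane) of ``square type'' call for slightly different arguments.

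For a Minkowski plane $\cm(2,q,\varphi)$, I would work directly with the $\pgl(2,q)$ model. Fix a point $(a,b) \in \pg(1,q) \times \pg(1,q)$. A circle passes through $(a,b)$ exactly when $f_M(a) = b$; depending on whether $M \in \psl(2,q)$ or $M$ lies in the other coset of $\psl(2,q)$ in $\pgl(2,q)$, this translates to $Ma = b$ or $Ma^\varphi = b$ respectively. By the sharp $3$-transitivity of $\pgl(2,q)$ on $\pg(1,q)$, the stabilizer in $\pgl(2,q)$ of any single point has order $q(q-1)$, and it contains an upper-triangular representative of non-square determinant, so it is not contained in $\psl(2,q)$. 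Consequently the intersection of this stabilizer with $\psl(2,q)$ has order $\tfrac{q(q-1)}{2}$, and each of the two cosets of $\psl(2,q)$ contributes exactly $\tfrac{q(q-1)}{2}$ matrices $M$ meeting the relevant condition. This yields $q^2-q$ circles through $(a,b)$, split evenly between square and non-square type.

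For the ovoidal Möbius plane, fix $Q \in \mq^-(3,q)$. Via the polarity, the circles through $Q$ correspond bijectively to points $P \in Q^\perp \setminus \mq^-(3,q)$, and since $Q^\perp$ is tangent at $Q$ this set has cardinality $(q^2+q+1)-1 = q^2+q$; the circle $P^\perp \cap \mq^-(3,q)$ is of square type exactly when $\kappa(P) \in S_q$. I would then study the restriction $\kappa|_{Q^\perp}$. Its only projective zero is $Q$, so its radical is the $1$-dimensional subspace associated with $Q$, and the form $\bar\kappa$ induced on the $2$-dimensional quotient is anisotropic. Over $\FF_q$ with $q$ odd, the unique anisotropic binary quadratic form up to equivalence is the norm form of $\FF_{q^2}/\FF_q$. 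Hence $\kappa(P) \bmod (\FF_q^*)^2$ depends only on the line through $Q$ in $Q^\perp$ containing $P$, and as that line varies over the $q+1$ lines of $Q^\perp$ through $Q$ the values realise the image of the composition $\FF_{q^2}^*/\FF_q^* \to \FF_q^*/(\FF_q^*)^2$ induced by the norm. This composition is a surjective group homomorphism onto a group of order $2$, so its kernel has size $\tfrac{q+1}{2}$ and exactly half of the $q+1$ lines give values in $S_q$. Each such line contributes $q$ points to $Q^\perp \setminus \set Q$, producing $\tfrac{q(q+1)}{2}$ circles of square type, which is half of the $q^2+q$ circles through $Q$.

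The algebraic case reduces to coset bookkeeping, so the main obstacle lies in the geometric case: identifying the quotient form $\bar\kappa$ with (a scalar multiple of) the norm form of $\FF_{q^2}/\FF_q$. This hinges on $Q^\perp$ being the tangent plane at $Q$ together with the classification of binary quadratic forms over a finite field of odd order. Once this identification is in place, surjectivity of the norm map immediately yields the clean $1:1$ split, without any further explicit calculation.
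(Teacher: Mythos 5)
Your proof is correct, but it takes a genuinely different route from the paper in both halves. For the Möbius case, the paper also reduces to the $q+1$ tangent lines through $Q$ in $Q^\perp$ (on each of which the square class of $\kappa$ is constant away from $Q$), but it establishes the even split of these lines synthetically: it takes an auxiliary line $l \subset Q^\perp$ not through $Q$, notes $|l \cap \mq^\eps| = 1+\eps$, and uses the earlier count of external points on such a line together with the lemma identifying externality with the square class of $-\kappa(P)\kappa(R)$. You instead identify the quotient form $\kappa$ on $Q^\perp/\langle Q\rangle$ with (a scalar multiple of) the norm form of $\FF_{q^2}/\FF_q$ and invoke surjectivity of the norm; this is a clean algebraic shortcut, and your parenthetical about the scalar is the right hedge, since multiplying by a non-square merely swaps the two equally-sized fibres. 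For the $\cm(2,q,\varphi)$ case the difference is more substantial: the paper first proves the statement for the \emph{ovoidal} Minkowski plane by the same geometric argument and then transfers it to $\cm(2,q,\varphi)$ by observing that the square-type circles through $(P,Q)$ are counted by $|\{A \in \psl(2,q) : AP = Q\}|$ in both models; you bypass the geometry entirely and count directly, using that the point stabiliser in $\pgl(2,q)$ is not contained in $\psl(2,q)$, so each relevant coset of the stabiliser meets $\psl(2,q)$ in exactly half its elements. Your version is more self-contained and elementary for the Minkowski case; the paper's version has the advantage of reusing machinery (external points, Lemma \ref{LmLPerp}) already set up for the association-scheme computations.
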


\begin{proof}
We first prove this for ovoidal Möbius and Minkowski planes.
Let $\kappa$, $\perp$, and $\mq^\eps$ be as before.
Take a line $l$ and an oval plane $P^\perp$ through $l$.
If $|l \cap \mq^\eps|=1$, then all points of $l \setminus \mq^\eps$ are external to $P^\perp \cap \mq^\eps$, and thus of the same type (i.e.\ square or non-square).
If $|l \cap \mq^\eps|$ equals 0 or 2, then half of the points $R \in l \setminus \mq^\eps$ satisfy $\kappa(R) = S_q$.
Now take a point $Q \in \mq^\eps$, and a line $l$ in $Q^\perp$ not through $Q$.
Then $|l \cap \mq^\eps| = 1+\eps$.
This way we see that $Q^\eps$ contains $\frac{q-\eps}2$ 1-secants through $Q$ on which all points $R \neq Q$ satisfy $\kappa(R)=S_q$, and equally many 1-secants with $\kappa(R) = \overline{S_q}$.
The lemma now follows by applying the polarity.

Next, consider a $\cm(2,q,\varphi)$.
Take a point, which is of the form $(P,Q) \in \pg(1,q)^2$.
Since the lemma holds in the ovoidal Minkowski plane of order $q$, the number of elements $A \in \psl(2,q)$ with $A P = Q$ equals half of the number of circles through $(P,Q)$.
\end{proof}

\begin{lm}
If $D$ is an ovoidal Möbius plane, or a $\cm(2,q,\varphi)$, of odd order $q$, then $\lambda_2(G_P) = \frac12{\sqrt{q(q^2-1)}}$.
\end{lm}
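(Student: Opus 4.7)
The strategy is: by Lemma \ref{LmLambda2GP}, I only need $\lambda_2(N)$, where $N_{c_1,c_2}$ counts common neighbours of $c_1,c_2\in L$ in $G_P$. Lemma \ref{LmBoundLambda} expresses $N_{c_1,c_2}$ in terms of $s=|c_1\cap c_2|\in\{1,2\}$ and of $n_{11}$, so the first task is to pin down $n_{11}$ in each case; afterwards I diagonalize $N$ using a tensor decomposition coming from the residue at $P$ together with the type splitting.

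The value of $n_{11}$ comes from combining Lemma \ref{LmDeza} with an analysis of the residue at $P$. Since $G_1$ has two connected components indexed by type, two circles of the same type have $2(q-1)$ common $G_1$-neighbours while two circles of different type have none. The residue at $P$ is an affine plane of order $q$: circles through $P$ become lines there, and $s=1$ corresponds exactly to parallel residue lines. Two parallel residue lines of "circle type" correspond to $G_1$-adjacent circles and hence share a type, so combined with Lemma \ref{LmEqManyOvalPlanes} the $q+1-\rho$ circle parallel classes of the residue split into $(q+1-\rho)/2$ classes of each type, each of size $q$. This forces any two different-type circles through $P$ to satisfy $s=2$, and two same-type circles to satisfy $s=1$ iff they lie in a common residue parallel class. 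Counting then shows that the number of circles through $P$ that are 1-secant to both $c_1,c_2$ equals $q-2$ if $s=1$ and $0$ if $s=2$. Subtracting these from the Deza count yields $n_{11}=q$ (same type, $s=1$), $n_{11}=2(q-1)$ (same type, $s=2$), and $n_{11}=0$ (different types, forced to $s=2$).

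Substituting back into Lemma \ref{LmBoundLambda} and collecting terms, $N$ takes the form
\[
 N \;=\; \alpha I + \beta K + \tfrac{q-1}{2}\,T + \delta'\, J,
\]
where $K$ is the $L\times L$ adjacency matrix of "distinct circles through $P$ in the same residue parallel class", $T$ is the same-type indicator (with $1$'s on the diagonal), $J$ is the all-ones matrix, and $\alpha,\beta,\delta'$ are explicit. Indexing the circles of $L$ by (type, parallel class within type, circle within class) gives a tensor decomposition $\mathbb{R}^L\cong\mathbb{R}^2\otimes\mathbb{R}^{(q+1-\rho)/2}\otimes\mathbb{R}^q$, in which $I,K,T,J$ act as elementary tensor products of $I$-, $J$- and $(J-I)$-matrices. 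These matrices commute and are therefore simultaneously diagonalized by the tensor basis. A direct computation, uniform in $\rho\in\{0,2\}$, yields exactly four distinct eigenvalues of $N$: the largest equals $\delta_L\delta_R$ (which matches the expected $\sqrt{\delta_L\delta_R}$ upper bound coming from biregularity of $G_P$ and serves as a sanity check), and the second-largest turns out to be $\tfrac{q(q^2-1)}{4}$. Lemma \ref{LmLambda2GP} then gives $\lambda_2(G_P)=\tfrac{1}{2}\sqrt{q(q^2-1)}$.

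The main obstacle is really the residue argument identifying circle parallel classes with single types; once that geometric input is in place, the clean form of $N$ and the subsequent diagonalization are routine. The case $\cm(2,q,\varphi)$ rides along for free via Lemma \ref{LmG1MinkowskiIsomorphic}, since both the two-component structure of $G_1$ and the Deza values on each component transport to the ovoidal representation.
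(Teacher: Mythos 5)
Your proposal follows the paper's proof essentially step for step: the same determination of $n_{11}$ via Lemma \ref{LmDeza} together with the residue at $P$ (parallel residue classes are monochromatic in type, different types force $s=2$), the same tensor/block structure of $N$ indexed by type, parallel class and circle, and the same appeal to Lemma \ref{LmLambda2GP}. The asserted value $\lambda_2(N)=\tfrac{q(q^2-1)}4$ is correct in both cases $\rho\in\{0,2\}$, though the eigenspace achieving it differs between the two (it is $\delta-m_1$ only for $\rho=2$), a detail your ``uniform in $\rho$'' computation would need to accommodate.
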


\begin{proof}
Take two distinct circles $c_1$ and $c_2$ through the point $P$.
First we compute the number $n_{11}$ from Lemma \ref{LmBoundLambda}.
If $c_1$ and $c_2$ are of a different type (i.e.\ exactly one of them is of square type), then $n_{11} = 0$.
If $|c_1 \cap c_2| = 1$, then $c_1$ and $c_2$ have $2(q-1)$ common neighbours in $G_1$ by Lemma \ref{LmDeza}.
However, in the affine residue through $P$, we find $q-2$ other circles in the same parallel class as $c_1$ and $c_2$.
Hence, $n_{11} = q$.
If $|c_1 \cap c_2| = 2$ and they are of the same type, $n_{11} = 2(q-1)$, again by Lemma \ref{LmDeza}.
Using Lemma \ref{LmBoundLambda}, we see that the number of common neighbours of $c_1$ and $c_2$ equals
\[
 \begin{cases}
 m_1 = \frac q 4 (q-3+\rho)^2 & \text{if } |c_1 \cap c_2|=1, \\
 m_2 = \frac{q-1}4 \big(q^2 + 2(\rho-2)(q-1)+1 \big) & \text{if $|c_1 \cap c_2| = 2$ and $c_1$ and $c_2$ are of the same type}, \\
 m_3 = \frac{(q-1)^2}4(q-3+2\rho) & \text{otherwise}.
 \end{cases}
\]

Half of the parallel classes in the affine residue through $P$ consist of square type circles.
Arrange the circles through $P$ such that they consist of consecutive blocks of the circles in a parallel class in the affine residue.
Put the circles of square type before the circles of non-square type.
Then
\[
 N = \begin{pmatrix} 1 & 0 \\ 0 & 1 \end{pmatrix} \otimes \left[ I_{\frac{q+1-\rho}2} \otimes \big(\delta I_q + m_1(J-I)_q \big) + (J-I)_{\frac{q+1-\rho}2} \otimes m_2 J_q \right]
 + \begin{pmatrix} 0 & 1 \\ 1 & 0 \end{pmatrix} \otimes m_3 J_{q \frac{q+1-\rho}2}.
\]
The eigenspaces are spanned by the following eigenvectors.
Here $y_d$ denotes a vector from $\RR^d$, and $x_d$ denotes a vector from $\vspan{\one_d}^\perp$.
\begin{enumerate}
 \item The all-one vector with eigenvalue $\delta \frac{(q+\rho-2)(q+1-\rho)}2$,
 \item vectors $y_{q+1-\rho} \otimes x_q$, with eigenvalue $\delta - m_1$,
 \item vectors $y_2 \otimes x_{\frac{q+1-\rho}2} \otimes \one_q$ with eigenvalue $\delta + (q-1) m_1 - q m_2 = 0$,
 \item vectors $x_2 \otimes \one_{q \frac{q+1-\rho}2}$ with eigenvalue $\delta + (q-1)m_1 + q \frac{q-1-\rho}2 m_2 - q \frac{q+1-\rho}2 m_3$.
\end{enumerate}
From this, the spectrum of $N$ can be calculated.
Then apply Lemma \ref{LmLambda2GP}.
\end{proof}

\begin{prop}
 \label{PropFewOrMany}
 Let $\mf$ be an intersecting family in an ovoidal $\cm(\rho,q)$ or a $\cm(2,q,\varphi)$, and let $P$ be a point.
 In case of a Laguerre plane of even order, switch to the extended Laguerre plane.
 Define $S = \sett{c \in \mf}{P \in c}$ and $T = \mf \setminus S$, and define
 \[
  \lambda = \begin{cases}
   q(q^2-1) & \text{if $q$ is odd,} \\
   q(q-2+\rho)(q+(1-\rho)(2-\rho)) & \text{if $q$ is even.} 
  \end{cases}
 \]
 Then
 \[
  |S| \, |T| \leq \left( \frac q {q+\rho-2} \right)^2 \lambda \left(1 - \frac{|S|}{q(q+1-\rho)} \right) \left(1 - \frac{|T|}{q^2(q-1)} \right).
 \]
\end{prop}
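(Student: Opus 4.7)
The plan is to apply the bipartite expander mixing lemma directly to the graph $G_P$, using the eigenvalue computations already carried out in this section.

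First, I observe that since $\mf$ is intersecting, no two circles of $\mf$ are disjoint. In particular, no circle of $S$ is disjoint from any circle of $T$. Since the edges of $G_P$ are precisely the pairs of disjoint circles with exactly one endpoint through $P$, this gives $e(S,T) = 0$.

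Second, I assemble the numerical parameters of $G_P$. By Lemma \ref{LmDelta}, $G_P$ is biregular with $d_L = \delta = (q+\rho-2)\binom{q}{2}$; the vertex counts are $|L| = q(q+1-\rho)$ (circles through $P$) and $|R| = b - |L| = q^3 + (1-\rho)q - q(q+1-\rho) = q^2(q-1)$. Therefore
\[
\frac{d_L}{|R|} = \frac{(q+\rho-2)q(q-1)/2}{q^2(q-1)} = \frac{q+\rho-2}{2q}.
\]

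Third, I check that the eigenvalue lemmas of this section collectively give $4\,\lambda_2(G_P)^2 = \lambda$ in every case of the proposition. For $q$ odd, both the Laguerre lemma (with $\rho=1$) and the Möbius/$\cm(2,q,\varphi)$ lemma yield $\lambda_2(G_P) = \tfrac12\sqrt{q(q^2-1)}$, so $4\lambda_2(G_P)^2 = q(q^2-1)$, matching the stated value. For $q$ even with $\rho\in\{0,2\}$, the second lemma gives $\lambda_2(G_P)^2 = \tfrac14 q(q+2-\rho)(q-2+\rho)$, and one verifies $q(q-2+\rho)(q+2-\rho) = q(q-2+\rho)(q+(1-\rho)(2-\rho))$ since $(1-\rho)(2-\rho)$ equals $2$ for $\rho=0$ and $0$ for $\rho=2$. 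For the extended Laguerre plane ($\rho=1$, $q$ even), the first lemma gives $\lambda_2(G_P)^2 = q^2(q-1)/4$, which matches $q(q-1)(q+0) = q^2(q-1)$.

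Finally, the bipartite expander mixing lemma applied to $(S,T) \subseteq (L,R)$, using $e(S,T)=0$, reads
\[
\frac{q+\rho-2}{2q}\,|S|\,|T| \;\leq\; \lambda_2(G_P)\,\sqrt{|S|\,|T|\!\left(1-\frac{|S|}{q(q+1-\rho)}\right)\!\left(1-\frac{|T|}{q^2(q-1)}\right)}.
\]
Squaring, dividing by $\bigl((q+\rho-2)/(2q)\bigr)^{2}|S||T|$, and replacing $4\lambda_2(G_P)^2$ by $\lambda$ yields the inequality in the statement.

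There is no real obstacle: all the heavy lifting was done in the eigenvalue lemmas earlier in the section, and the proposition is essentially their packaging through the bipartite expander mixing lemma. The only bookkeeping task is the unification of the four case-specific values of $\lambda_2(G_P)^2$ into the single formula for $\lambda$, which is a direct substitution exercise as outlined above.
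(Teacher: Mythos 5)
Your proposal is correct and follows exactly the paper's own argument: note $e(S,T)=0$ because $\mf$ is intersecting, then apply the bipartite expander mixing lemma to $G_P$ with the biregularity from Lemma \ref{LmDelta} and the values of $\lambda_2(G_P)$ computed earlier in the section. The only difference is that you spell out the bookkeeping (the ratio $d_L/|R|$ and the case-by-case verification that $4\lambda_2(G_P)^2=\lambda$) which the paper leaves implicit; all of your computations check out.
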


\begin{proof}
Consider the graph $G_P$ from Definition \ref{DfGp}.
Since $\mf$ is an intersecting family, $e(S,T) = 0$.
Now apply the bipartite expander mixing lemma to $G_P$ on the sets $S$ and $T$.
The eigenvalue $\lambda_2(G_P)$ can be found in the above lemmata.
\end{proof}

\begin{rmk}
 \label{RmkWorstConstant}
The constant $\left( \frac q {q+\rho-2} \right)^2 \lambda$ above is at most $\frac{q^3(q^2-1)}{(q-2)^2}$, which occurs for Möbius planes of odd order.
\end{rmk}

\section{The main theorems}
 \label{SectionMain}

First we prove Erd\H os-Ko-Rado type results for the known circle geometries, with very mild conditions on the order.
Then we prove a stability result in these circle geometries.
We will use the following lemma.

\begin{lm}
 \label{LmHM}
 Suppose that $\mf$ is an intersecting family in a $\cm(\rho,q)$.
 If a point $P$ lies on more than $\binom{q+2-\rho}2$ circles of $\mf$, then $P$ lies on all circles of $\mf$.
\end{lm}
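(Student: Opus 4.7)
The plan is to prove the contrapositive: assuming some circle $c\in\mf$ does not contain $P$, I will show that the number of circles of $\mf$ through $P$ is at most $\binom{q+2-\rho}{2}$. Since $\mf$ is intersecting, every circle of $\mf$ through $P$ must meet $c$ in at least one point. The entire argument therefore reduces to counting circles through $P$ that intersect a fixed circle $c$ not passing through $P$.

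To perform this count I would split the circles through $P$ meeting $c$ into tangent circles (meeting $c$ in exactly one point) and secant circles (meeting $c$ in exactly two points). Recall from the preliminaries that a circle contains a unique point from each parallel class, so exactly $\rho$ of the $q+1$ points of $c$ are parallel to $P$, leaving $q+1-\rho$ non-parallel points. Any circle through $P$ meeting $c$ must do so at one of these $q+1-\rho$ non-parallel points, because parallel points cannot lie on a common circle.

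For the secant circles: a circle through $P$ together with two non-parallel points $Q_1,Q_2\in c$ is determined, via axiom (1) of a circle geometry, by the triple $\{P,Q_1,Q_2\}$, which is automatically pairwise non-parallel. Every choice of such an unordered pair yields a distinct secant circle, giving $\binom{q+1-\rho}{2}$ of them. For the tangent circles: fix a non-parallel point $Q\in c$; by axiom (2), there is a unique circle through $P$ and $Q$ tangent to $c$. These are distinct as $Q$ varies, so there are exactly $q+1-\rho$ tangent circles.

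Adding the two contributions yields
\[
 (q+1-\rho) + \binom{q+1-\rho}{2} \;=\; \frac{(q+1-\rho)(q+2-\rho)}{2} \;=\; \binom{q+2-\rho}{2},
\]
which is the desired upper bound on $|\{c'\in\mf : P\in c'\}|$, contradicting the hypothesis. There is no real obstacle here: the only subtle point is remembering to count only the non-parallel points of $c$ (so that axioms (1) and (2) actually apply), which is why the bound depends on $\rho$. Once that is handled the proof is a one-line double count.
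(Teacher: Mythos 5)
Your proof is correct and follows essentially the same route as the paper: both count the circles through $P$ meeting a fixed $c \in \mf$ with $P \notin c$ by splitting into tangents (one per non-parallel point of $c$, via axiom (2)) and secants (one per non-parallel pair, via axiom (1)), arriving at $\binom{q+1-\rho}{2} + (q+1-\rho) = \binom{q+2-\rho}{2}$.
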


\begin{proof}
 Suppose that $\mf$ contains a circle $c$ not through $P$.
 Then every circle through $P$ in $\mf$ must intersect $c$.
 Every two points of $c$ not parallel to $P$ determine a unique circle through $P$, and every point $Q$ of $c$ not parallel to $P$ determines a unique circle through $P$ tangent to $c$ in $Q$.
 Therefore, the number of circles through $P$ intersecting $c$ equals $\binom{q+1-\rho}2 + q+1-\rho = \binom{q+2-\rho}2$.
\end{proof}

\subsection{Characterisation of the largest intersecting families}

\begin{thm}
Let $\mf$ be an intersecting family in an ovoidal Möbius plane of odd order $q>3$.
Then $|\mf| \leq q(q+1)$, with equality if and only if $\mf$ consists of all circles through a fixed point.
\end{thm}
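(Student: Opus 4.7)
The strategy is to locate a point $P$ that is incident with more than $\binom{q+2}{2}$ circles of $\mf$. By Lemma~\ref{LmHM} this forces every circle of $\mf$ to pass through $P$; since a point of a Möbius plane of order $q$ lies on exactly $q^2+q = q(q+1)$ circles, it follows that $|\mf| \leq q(q+1)$, with equality iff $\mf$ is precisely the pencil at $P$.

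Assume $|\mf| \geq q(q+1)$. First I would apply Proposition~\ref{PropMany} with $\rho = 0$, $a = 2$ to produce a point $P$ whose degree $s := |\{c \in \mf : P \in c\}|$ satisfies
\[
 s \;\geq\; \mu \;:=\; \Bigl(2 - \tfrac{2(q-1)}{q^2+1}\Bigr)\,\frac{|\mf|}{q+1},
\]
which is of order $2q$ when $|\mf|$ is close to $q(q+1)$ --- far short of the $\binom{q+2}{2}$ threshold we want. I would then boost this via Proposition~\ref{PropFewOrMany} at $P$, which uses the intersecting property $e(S,T)=0$ to give
\[
 s\bigl(|\mf|-s\bigr) \;\leq\; \frac{q^3(q^2-1)}{(q-2)^2}\Bigl(1 - \tfrac{s}{q(q+1)}\Bigr)\Bigl(1 - \tfrac{|\mf|-s}{q^2(q-1)}\Bigr).
\]
Read as a quadratic inequality in $s$, both sides are downward parabolas; since $\tfrac{1}{(q-2)^2} < 1$ for $q \geq 4$, the difference is again a downward parabola and the admissible values of $s$ form the complement of an open interval $(s_1, s_2)$. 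I would compute $s_1, s_2$ explicitly (as functions of $|\mf|$) from the quadratic formula and then verify the two numerical inequalities $\mu > s_1$ and $s_2 > \binom{q+2}{2}$ uniformly for every odd $q > 3$ and every admissible $|\mf| \geq q(q+1)$.

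These two inequalities force $s \geq s_2 > \binom{q+2}{2}$, and then the reduction in the first paragraph finishes the proof. The main obstacle is precisely the arithmetic of verifying $\mu > s_1$ and $s_2 > \binom{q+2}{2}$ in the required range; the exclusion of $q=3$ is visible already in the fact that the factor $(q-3)$ shows up in the denominator of $s_1$, causing the forbidden interval $(s_1,s_2)$ to collapse and the argument to degenerate. A minor bookkeeping point is that for $|\mf| > q(q+1)$ the upper admissible interval $[s_2, q(q+1)]$ is in fact empty (at $s = q(q+1)$ the right-hand side of the boxed inequality vanishes while the left-hand side is positive), so the single inequality $\mu > s_1$ already rules out $|\mf| > q(q+1)$; the equality case $|\mf| = q(q+1)$ is exactly where $s_2 = q(q+1)$ and the pencil is forced.
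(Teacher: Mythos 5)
Your proposal follows the paper's own argument essentially verbatim: Proposition~\ref{PropMany} (with $\rho=0$, $a=2$) supplies a point on roughly $2q$ circles of $\mf$, Proposition~\ref{PropFewOrMany} turns the intersecting condition into the quadratic gap in $s$ whose lower root is below $\mu$ and whose upper branch forces $s=q(q+1)$, and Lemma~\ref{LmHM} finishes; the deferred arithmetic does check out exactly as you predict (the paper reduces it to $s\bigl((q-2)^2-1\bigr)\leq q^3-2q^2-q$, giving $s<q+4$ for $q\geq 5$, and your observation that the factor $(q-1)(q-3)$ explains the exclusion of $q=3$ is correct). The only cosmetic difference is that the paper runs the inequality at $|\mf|=q(q+1)$ and handles larger families by passing to a subfamily, whereas you keep $|\mf|$ general, but this is the same proof.
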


\begin{proof}
Let $\mf$ be an intersecting family of size $q(q+1)$.
By Proposition \ref{PropMany}, there exists a point $P$ that lies on $s \geq 2q-1$ circles of $\mf$.
Apply Proposition \ref{PropFewOrMany} to this point $P$.
This yields
\begin{align*}
 s(q(q+1)-s) & \leq \frac {q^2} {(q-2)^2} q(q^2-1) \left(1-\frac s{q(q+1)} \right) \left( 1-\frac{q(q+1)-s}{q^2(q-1)} \right) \\
 & = \frac 1 {(q-2)^2} \big(q(q+1)-s \big) \big(q^2(q-1) - q(q+1) + s \big)
\end{align*}
Suppose that $s < q(q+1)$.
Then this inequality can be rewritten as
\[
 s \big( (q-2)^2 - 1 \big) \leq q^3-2q^2-q.
\]
Since $q \geq 5$, this implies that $s < q+4$, contradicting $s \geq 2q-1$.
Therefore, $s$ must equal $q(q+1)$, or in other words $\mf$ must consist of all circles through $P$.

It now follows that if $\mf$ is an intersecting family of size at least $q(q+1)$, $\mf$ contains a subfamily of size $q(q+1)$ which must consist of all circles through a fixed point.
By Lemma \ref{LmHM}, $\mf$ must equal this subfamily.
\end{proof}

\begin{rmk}
 \label{RmkMöbiusQ=3}
There is a unique Möbius plane of order 3, see e.g.\ \cite{thas1994}.
This Möbius plane is ovoidal, and constructed from plane sections with $\mq^-(3,3)$.
Take two oval planes $P^\perp$ and $R^\perp$.
Suppose that $P$ and $R$ have coordinate vectors $x$ and $y$ respectively.
Then $(PR)^\perp$ is a $0$-secant to $\mq^-$ if and only if $PR$ is a 2-secant, by Lemma \ref{LmLPerp}.
This means that the points with coefficients $x \pm y$ must both be in $\mq^-$ or equivalently that for $\alpha = \pm 1$
\[
 \kappa(x + \alpha y) = \kappa(x) + 2 \alpha b(x,y) + \alpha^2 \kappa(y)
 = \kappa(x) + 2 \alpha b(x,y) + \kappa(y) = 0.
\]
This happens if and only if $\kappa(x) + \kappa(y) = 0$ and $b(x,y)=0$.
Therefore, the sets $\sett{P^\perp}{\kappa(P)=S_3}$ and $\sett{P^\perp}{\kappa(P)=\overline{S_3}}$ are intersecting families of size $\frac{q(q^2+1)}2 = 15$.
Note that $q(q+1)=12$.
Furthermore, it is well-known that the only cocliques in a connected regular bipartite graph of maximal size are the bipartition classes.
Thus, these two families are the only cocliques of size 15 in the $G_0$ graph, proving they are the only two intersecting families of size 15 in the $\cm(0,3)$.
\end{rmk}

\begin{thm}
Let $\mf$ be an intersecting family in a $\cm(2,q,\varphi)$ from Construction \ref{ConstrMinkowski}.
Then $|\mf| \leq q(q-1)$, with equality if and only if $\mf$ consists of all circles through a fixed point.
\end{thm}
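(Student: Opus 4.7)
The plan is to mirror the argument used for ovoidal Möbius planes of odd order in the preceding theorem. If $\varphi = \text{id}$, then $\cm(2,q,\text{id})$ is the ovoidal Minkowski plane of order $q$, and the statement already follows from \cite{adriaensen2021}, so I would reduce to the case $\varphi \neq \text{id}$. This forces $q$ to be an odd prime power admitting a non-trivial field automorphism, hence $q \geq 9$.

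Given an intersecting family $\mf$ of size $q(q-1)$, I would apply Proposition \ref{PropMany} with $\rho = 2$ and $a = 2$ to obtain a point $P$ lying on at least
\[
s_0 := \frac{2q}{q+1} \cdot \frac{q(q-1)}{q+1} = \frac{2q^2(q-1)}{(q+1)^2}
\]
circles of $\mf$. Writing $s$ for that number, I would then apply Proposition \ref{PropFewOrMany}: because $q + \rho - 2 = q$, the constant simplifies to $1$, and with $\lambda = q(q^2-1)$ the inequality reads
\[
s(q(q-1) - s) \leq q(q^2-1) \left( 1 - \frac{s}{q(q-1)} \right) \left( 1 - \frac{q(q-1) - s}{q^2(q-1)} \right).
\]
Assuming $s < q(q-1)$ allows the common factor $q(q-1) - s > 0$ to be cancelled on both sides, which after routine simplification produces an upper bound of order $q$ on $s$. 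Comparing this with $s_0$, whose leading order is $2q$, yields a contradiction for every $q \geq 5$ and in particular throughout the range $q \geq 9$. Hence $s = q(q-1) = |\mf|$, and $\mf$ must be the pencil through $P$.

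For the general statement, any intersecting family $\mf$ with $|\mf| \geq q(q-1)$ contains a subfamily of size exactly $q(q-1)$, which by the preceding step is a pencil through some point $P$. Since $q(q-1) > \binom{q}{2} = \binom{q+2-\rho}{2}$, Lemma \ref{LmHM} forces every circle of $\mf$ through $P$, yielding $|\mf| \leq q(q-1)$ with equality only for that pencil. The main obstacle is purely numerical: the bound from Proposition \ref{PropFewOrMany} must comfortably beat the bound from Proposition \ref{PropMany} for each $q$ in scope, and this fails at $q = 3$. Fortunately, the initial reduction to $\varphi \neq \text{id}$ removes exactly this problematic case, since $\FF_3$ admits no non-trivial automorphism.
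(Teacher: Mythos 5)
Your proposal is correct and follows essentially the same route as the paper: reduce to $\varphi \neq \mathrm{id}$ (hence $q \geq 9$), invoke Proposition \ref{PropMany} to find a point on roughly $2q$ circles, feed that into Proposition \ref{PropFewOrMany} (whose right-hand side is bounded by $(q+1)(q(q-1)-s)$ after cancelling the factor $q(q-1)-s$), and conclude with Lemma \ref{LmHM}. The only differences are cosmetic: the paper rounds the lower bound to the integer $2q-5$ and leaves the final Lemma \ref{LmHM} step implicit, while you keep the exact expression and spell that step out.
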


\begin{proof}
If $\varphi=\text{id}$, the corresponding Minkowski plane is ovoidal, and the theorem has been proven by Meagher and Spiga \cite{meagherspiga}.
So suppose that $\varphi \neq \text{id}$.
Then $q$ cannot be prime, hence $q \geq 9$.
Consider an intersecting family $\mf$ of size $q(q-1)$.
By Proposition \ref{PropMany}, there exists a point $P$ that lies on $s \geq 2q-5$ circles of $\mf$.
Then by Proposition \ref{PropFewOrMany},
\[
 s (q(q-1) - s) \leq q(q^2-1) \left(1- \frac s {q(q-1)} \right) \left( 1 - \frac{q(q-1) - s}{q^2(q-1)} \right)
 < (q+1)( q(q-1) - s).
\]
Thus, if $s < q(q-1)$, then this inequality implies that $s < q+1$, contradicting $s\geq 2q-5$ since $q\geq9$.
Therefore, the only intersecting families of size $q(q-1)$ are the ones containing all circles through a fixed point, and the theorem follows.
\end{proof}

Together with the main theorems of \cite{adriaensen2021}, this proves Theorem \ref{ThmIntrMain1}.

\subsection{Stability result}

\begin{thm}
Consider an intersecting family $\mf$ in one of the known finite circle geometries, i.e.\ an ovoidal circle geometry or Construction \ref{ConstrMinkowski}, of order $q$.
In case of an ovoidal Laguerre plane of even order, switch to the extended Laguerre plane.
If $|\mf| \geq \frac 1 {\sqrt 2} q^2 + 2 \sqrt 2 q + 8$, then $\mf$ consists of circles through a common point.
\end{thm}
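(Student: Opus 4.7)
The plan is to combine Propositions \ref{PropMany} and \ref{PropFewOrMany} with the Hilton--Milner-style bound of Lemma \ref{LmHM}. Writing $f = |\mf|$ and assuming the hypothesis, Proposition \ref{PropMany} produces a point $P$ incident to at least
\[
 s_0 := \left(2 - a\,\frac{q-1}{q^2+1-\rho}\right)\frac{f}{q+1}
\]
circles of $\mf$, where $a \in \{0,1,2\}$ depends on the geometry. I will argue that this forces $P$ to lie on \emph{all} $f$ circles.

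Let $s$ be the number of circles of $\mf$ through $P$ and $t = f - s$. Proposition \ref{PropFewOrMany} yields $s\,t \leq C$, where $C = \left(\tfrac{q}{q+\rho-2}\right)^2 \lambda$. Viewed as a quadratic in $s$, either $s \leq s_-$ or $s \geq s_+$, where $s_\pm = \tfrac12\bigl(f \pm \sqrt{f^2 - 4C}\bigr)$ are the roots of $x(f-x) = C$. To rule out the small regime, I will check $s_0 > s_-$. Since $s_0 \leq f/2$ in the relevant range, this is equivalent to the explicit inequality $s_0(f - s_0) > C$. To leading order $s_0 \approx 2f/(q+1)$ gives $s_0(f - s_0) \approx 2f^2(q-1)/(q+1)^2$, so the requirement reduces to $f^2 \gtrsim q^4/2$, i.e.\ $f \gtrsim q^2/\sqrt 2$. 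The lower-order terms $2\sqrt 2\, q + 8$ in the hypothesis are tailored to absorb the corrections from the exact form of $s_0$, from the factor $(q/(q+\rho-2))^2$ inside $C$, and from the strictness of the inequality.

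Once $s \geq s_+$ is established, the next task is to verify $s_+ > \binom{q+2-\rho}{2}$, so that Lemma \ref{LmHM} forces $s = f$. Writing $\alpha = \binom{q+2-\rho}{2}$, if $\alpha \leq f/2$ there is nothing to check since $s_+ \geq f/2$; otherwise $s_+ > \alpha$ is equivalent (again by the parabola picture) to $\alpha(f - \alpha) > C$. Here $\alpha \approx q^2/2$ and $f - \alpha \gtrsim q^2(\sqrt 2 - 1)/2$, so $\alpha(f - \alpha) \gtrsim \tfrac14(\sqrt 2 - 1)\, q^4$, which dominates $C \lesssim q^3$ for all but very small $q$. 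The same correction terms in the threshold ensure strict inequality across the relevant ranges.

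The main obstacle is pushing the two explicit inequalities $s_0(f - s_0) > C$ and $\alpha(f - \alpha) > C$ through in every case of the dichotomy (ovoidal Möbius, ovoidal Laguerre, ovoidal Minkowski of either parity, extended Laguerre plane, and $\cm(2,q,\varphi)$) with a single uniform threshold for $f$. By Remark \ref{RmkWorstConstant} the tightest case is the Möbius plane of odd order, where $C = q^3(q^2-1)/(q-2)^2$ is largest and Proposition \ref{PropMany} uses $a = 2$, making $s_0$ smallest; handling this case suffices, as every other geometry gives better constants at each step. For very small $q$ (for example $q = 2, 3$ in the Möbius case) the threshold $\tfrac{1}{\sqrt 2}q^2 + 2\sqrt 2\, q + 8$ already exceeds the maximum possible size of an intersecting family (cf.\ Remark \ref{RmkMöbiusQ=3}), so the conclusion holds vacuously.
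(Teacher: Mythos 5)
Your proposal is correct and follows essentially the same route as the paper's own proof: find a point on at least $s_0$ circles via Proposition \ref{PropMany}, combine the quadratic inequality from Proposition \ref{PropFewOrMany} (in the worst case identified by Remark \ref{RmkWorstConstant}) with the parabola argument to show $s$ exceeds $\binom{q+2-\rho}{2}$, and conclude with Lemma \ref{LmHM}, with small $q$ handled vacuously. The only cosmetic difference is that the paper phrases the root-separation step as ``the inequality fails at both test values, hence at all intermediate ones'' and uniformly uses the bound $s \geq 2\frac{q}{(q+1)^2}f$ and the threshold $q \geq 11$, while you name the roots $s_\pm$ explicitly; the substance is identical.
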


\begin{proof}
Denote the size of $\mf$ by $f$.
We may assume that $q \geq 11$, otherwise $f$ exceeds the size of the largest intersecting family.
This implies that the largest intersecting families are the sets of all circles through a fixed point.
Then by Proposition \ref{PropMany}, some point $P$ lies on $s \geq 2 \frac q{(q+1)^2} f$ circles of $\mf$.
By Proposition \ref{PropFewOrMany} and Remark \ref{RmkWorstConstant}, we obtain the inequality
\[
 s (f-s) \leq q^3 \frac{q^2-1}{(q-2)^2}.
\]
The inequality does not hold for $s = 2 \frac q{(q+1)^2} f$ and for $s = \binom{q+2}2$ given the assumptions on $f$ and $q$.
Since it is quadratic in $s$, this implies that it doesn't hold for any intermediary values.
Hence, $P$ lies on more than $\binom{q+2}2$ circles of $\mf$, and by Lemma \ref{LmHM} on all circles of $\mf$.
\end{proof}

\subsection{Corollaries in other incidence structures}

An intersecting family in $\pgl(2,q)$ is a set $\mf$ of elements of $\pgl(2,q)$ such that for each $M,N \in \mf$ there exists a point $P$ of $\pg(1,q)$ with $M P = N P$.
We know that this is equivalent to an intersecting family in the ovoidal Minkowski plane of order $q$.

\begin{crl}
An intersecting family in $\pgl(2,q)$ of size at least $\frac 1 {\sqrt 2}q^2 + 2 \sqrt 2 q + 8$ lies in a coset of the stabiliser of some point of $\pg(1,q)$.
\end{crl}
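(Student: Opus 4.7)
The plan is to transport the statement across the well-known dictionary between $\pgl(2,q)$ and the ovoidal Minkowski plane of order $q$, namely Construction \ref{ConstrMinkowski} with $\varphi = \mathrm{id}$. Recall that in this dictionary, the points of the Minkowski plane are the pairs $(P,R) \in \pg(1,q)\times\pg(1,q)$ and the circles are the graphs $\sett{(P,MP)}{P \in \pg(1,q)}$ for $M \in \pgl(2,q)$. The circles corresponding to $M$ and $N$ meet if and only if there is some $P \in \pg(1,q)$ with $(P,MP)=(P,NP)$, i.e.\ $MP = NP$. Thus an intersecting family $\mf \subseteq \pgl(2,q)$ in the sense of the corollary is literally the same data as an intersecting family $\mf' = \sett{\textnormal{graph of }M}{M \in \mf}$ in the ovoidal Minkowski plane of order $q$, and $|\mf'|=|\mf|$.

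With this translation in hand, I would invoke the preceding stability theorem. Since $|\mf'|\geq \frac{1}{\sqrt2}q^2 + 2\sqrt 2 \, q + 8$, that theorem provides a common point $(P_0,Q_0)\in\pg(1,q)^2$ contained in every circle of $\mf'$. Translating back, this says $MP_0 = Q_0$ for every $M \in \mf$. Fix any $M_0 \in \mf$, which necessarily satisfies $M_0 P_0 = Q_0$. Then for every $M \in \mf$ we have $M_0^{-1}MP_0 = P_0$, so $M_0^{-1}M$ lies in the stabiliser $H \leq \pgl(2,q)$ of the point $P_0$. Therefore $\mf \subseteq M_0 H$, which is exactly a (left) coset of the stabiliser of a point of $\pg(1,q)$, as required.

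There is no genuine obstacle here: the corollary is simply the stability theorem rephrased once the correspondence between elements of $\pgl(2,q)$ and circles of the ovoidal Minkowski plane is spelled out. The only thing one should double-check is that the bound $\frac{1}{\sqrt 2}q^2 + 2\sqrt 2 q + 8$ from the Minkowski case applies verbatim, which it does since the ovoidal Minkowski plane is one of the known finite circle geometries covered by that theorem.
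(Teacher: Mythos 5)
Your proposal is correct and matches the paper's (implicit) argument: the paper states this corollary without proof precisely because, as you note, it is the stability theorem transported through the standard correspondence between elements of $\pgl(2,q)$ and circles of the ovoidal Minkowski plane. Your explicit back-translation of "all circles through a common point $(P_0,Q_0)$" into "$\mf$ lies in the coset $M_0\,\mathrm{Stab}(P_0)$" is exactly the intended reasoning.
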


Let $U_{2,q}$ denote the set of all polynomials of degree at most 2 over $\FF_q$.
For $f(X) = a X^2 + b X + c$, we define $f(\infty) = a$.
For each $f \in U_{2,q}$, define its graph $\sett{(x,f(x))}{f \in \FF_q \cup \set \infty}$.
Consider the incidence structure $(\mp,\mb)$ with $\mp=(\FF_q \cup \set \infty) \times \FF_q$ and $\mb$ the set of graphs of $f \in U_{2,q}$.
This incidence structure is an ovoidal Laguerre plane.
If $q$ is even, we can go to the extended Laguerre plane by adding the points $\set{-\infty}\times\FF_q$, and adding $(-\infty,b)$ to the graph of $aX^2+bX+c$.

We call $\mf \subseteq U_{q,2}$ an intersecting family if for every $f,g \in \mf$ there exists an $x \in \FF_q \cup \set{\pm \infty}$ with $f(x)=g(x)$.

\begin{crl}
An intersecting family in $U_{2,q}$ of size at least $\frac 1 {\sqrt 2} q^2 + 2 \sqrt 2 q + 8$ consists of functions $f$ with $f(x)=y$ for some fixed $x$ and $y$ in $\FF_q$.
\end{crl}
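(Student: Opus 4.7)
The plan is to recognise the statement as an immediate consequence of the stability theorem just proved. The preamble to the corollary exhibits an explicit identification: every polynomial $f \in U_{2,q}$ corresponds to its graph $\sett{(x,f(x))}{x \in \FF_q \cup \set{\infty}}$, which is a circle in the ovoidal Laguerre plane on the point set $\mp = (\FF_q \cup \set{\infty}) \times \FF_q$; when $q$ is even we pass to the extended Laguerre plane by adjoining the points $(-\infty,b)$ to the graph of $aX^2+bX+c$. By construction, two polynomials $f,g$ share a value $f(x)=g(x)$ for some $x \in \FF_q \cup \set{\pm\infty}$ if and only if their corresponding circles share a point, so any intersecting family $\mf \subseteq U_{2,q}$ is an intersecting family of circles in this (extended) Laguerre plane.

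Since $|\mf| \geq \frac{1}{\sqrt 2}q^2 + 2\sqrt 2 q + 8$, the stability theorem of the previous subsection applies directly and yields a point $(x_0, y_0)$ of $\mp$ (or, in the even-order extended case, a nucleus) lying on every circle of $\mf$. Translating back via the bijection, every $f \in \mf$ satisfies $f(x_0) = y_0$, which for $x_0 \in \FF_q$ is exactly the desired conclusion.

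I foresee no substantial obstacle, since the argument is essentially a translation between two incidence structures. The two points requiring a little care are, first, confirming that the construction genuinely produces an ovoidal Laguerre plane, which is standard (the graphs of polynomials of degree at most $2$ are precisely the oval plane sections of a quadratic cone over a conic in $\pg(3,q)$), and second, observing that the common point or nucleus supplied by the stability theorem might a priori have first coordinate in $\set{\pm\infty}$. A common point $(\infty,y_0)$ forces all $f \in \mf$ to share the same leading coefficient, while a common nucleus $(-\infty,y_0)$, available only for $q$ even, forces the same coefficient of $X$; these are the natural degenerate variants of the condition $f(x)=y$, and under the stated reading of the corollary they fall within the asserted form.
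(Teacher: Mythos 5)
Your proof takes the same route as the paper's (which simply defers to the first part of the proof of Theorem 6.2 of the earlier paper): identify $U_{2,q}$ with the circle set of an ovoidal Laguerre plane, pass to the extended plane when $q$ is even, apply the stability theorem, and translate the common point back. The reduction is correct and there is essentially nothing more to the argument.

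The one point worth pressing is your final sentence. You correctly observe that the common point supplied by the stability theorem may be $(\infty,y_0)$, or a common nucleus $(-\infty,y_0)$ when $q$ is even, but you then assert that these cases ``fall within the asserted form.'' As the corollary is literally stated they do not: it requires $x$ and $y$ to lie in $\FF_q$, whereas a common point $(\infty,y_0)$ says only that all members of $\mf$ share the leading coefficient $y_0$. Moreover these cases cannot be ruled out: under the paper's definition of an intersecting family in $U_{2,q}$ (agreement at some $x\in\FF_q\cup\set{\pm\infty}$), the set of all $q^2$ polynomials with leading coefficient $1$ is exactly the set of circles through the point $(\infty,1)$, hence a maximum intersecting family whose size exceeds $\frac1{\sqrt2}q^2+2\sqrt2\,q+8$ for large $q$. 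So the conclusion has to be read with $x$ ranging over $\FF_q\cup\set{\pm\infty}$, using the conventions $f(\infty)=a$ and $f(-\infty)=b$ for $f=aX^2+bX+c$; with that reading your argument is complete. This is a defect of the statement rather than of your proof, but it should be said explicitly rather than waved through.
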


\begin{proof}
Analogous to the first part of the proof of \cite[Theorem 6.2]{adriaensen2021}.
\end{proof}

\section{Concluding remarks}

In this paper, we characterised large intersecting families in the known finite $\cm(\rho,q)$ as sets of circles through a common point or nucleus.
One could wonder whether the bound can't be pushed further.
If an Erd\H os-Ko-Rado result holds in an incidence structure $(\mp,\mb)$, such as a $\cm(\rho,q)$, then often the largest intersecting families $\mf$ with $\bigcap \mf = \emptyset$ are of the following form, where $P$ and $B$ are a non-incident point and block respectively.
\[
 \sett{B' \in \mb}{P \in B', \, B' \cap B \neq \emptyset} \cup \set B
\]
If this is true in some incidence structure, it is often referred to as a Hilton-Milner type result, after the paper by Hilton and Milner \cite{hiltonmilner}, which proves such a result where $\mb$ is the set of all subsets of $\mp$ of size $k$.
In a circle geometry, such a Hilton-Milner type family would be of size $\binom {q+2-\rho} 2 + 1$, as can be seen from Lemma \ref{LmHM}.
To prove a Hilton-Milner result, we would need to improve the constant $\frac 1 {\sqrt 2}$ in the current bound to $\frac 1 2$.

A set of blocks $\mf$ is called a \emph{t-intersecting family} if any two elements of $\mf$ intersect in at least $t$ elements.
In general, the bounds on the size of a 2-intersecting family in a $\cm(\rho,q)$ are of the form $\frac 1 2 q^2 + \mo(q)$, but no 2-intersecting family with size larger than $2q$ is known.
To the author's best knowledge, ovoidal Laguerre planes of even order are the only type of circle geometry for which better bounds are known.
There, the largest 2-intersecting families have size $q$.
Thus, proving a Hilton-Milner type result for circle geometries could have interesting consequences for the size of the largest 2-intersecting families.

Recently, Maleki and Razafimahatratra \cite{malekirazafimahatratra} proved that the only intersecting families in GL$(2,q)$ acting on $\FF_q^2 \setminus \set \zero$ are stabilisers of a point or a hyperplane.
This is in contrast to the existence of Hilton-Milner type families in $\pgl(2,q)$ acting on $\pg(1,q)$, or equivalently in the ovoidal Minkowski planes.

\bigskip

One could also wonder whether the main theorem of this paper can be proven purely from the combinatorial definition of circle geometries, instead of having to use the structure of the known examples.
As was noted in \S \ref{SectionMany}, a key ingredient in the proof is that the 1-intersecting graphs of the known circle geometries have good expanding properties, except for ovoidal Laguerre planes of even order.
The fact that this doesn't hold for these Laguerre planes, might suggest that some extra structure needs to be imposed on the circle geometries.

\bigskip

It would also be nice to show that the relations as defined in the second column of Table \ref{TableRelations} yield a 5-class association scheme for ovoidal Minkowski planes of odd order (or a 4-class scheme if $q=3$).
It has been checked that this is the case for $q \leq 13$ by computer.

\bigskip

Lastly, we remark that it might be interesting to explore further applications of the technique of this paper to prove (stability of) Erd\H os-Ko-Rado results in other incidence structures.
The scope of our technique to find a point on ``not few'' circles of the intersecting family is probably limited to incidence structures in which the size of the intersection of two blocks can only take a very limited number of values.

\bigskip

{\bf Acknowledgements.}
The author would like to thank his supervisor Jan De Beule, and his colleague Sam Mattheus for lending his expertise in algebraic combinatorics.

\bibliographystyle{alpha}
\bibliography{ref.bib}

\end{document}